\newcommand{\RR}{\mathbb{R}}
\newcommand{\CC}{\mathbb{C}}
\newcommand{\QQ}{\mathbb{Q}}
\newcommand{\NN}{\mathbb{N}}
\newcommand{\ZZ}{\mathbb{Z}}
\def\Span{\mathrm{Span}}
\newtheorem{theorem}{Theorem}[section]
\newtheorem{prop}[theorem]{Proposition}
\newtheorem{cor}[theorem]{Corollary}
\newtheorem{lemma}[theorem]{Lemma}
\newtheorem{definition}[theorem]{Definition}
\theoremstyle{definition}
\theoremstyle{remark}
\newtheorem{rem}[theorem]{\bf Remark}
\newtheorem{example}[theorem]{\bf Example}
\title[The characteristic group of locally conformally product structures]{The characteristic group of locally conformally product structures}
\author{Brice Flamencourt}
\address[B. Flamencourt]{Universität Stuttgart, Institut für Geometrie und Topologie, Fachbereich Mathematik, Pfaffenwaldring 57, 70569 Stuttgart, Germany.}
\email{brice.flamencourt@mathematik.uni-stuttgart.de}
\subjclass[2020]{53C05, 53C18, 53C29}
\keywords{LCP manifolds, Conformal geometry, Weyl structures, Connections}
\begin{document}
\maketitle

\begin{abstract}
A compact manifold $M$ together with a Riemannian metric $h$ on its universal cover $\tilde M$ for which $\pi_1(M)$ acts by similarities is called a similarity structure.  In the case where $\pi_1(M) \not\subset \mathrm{Isom}(\tilde M, h)$ and $(\tilde M, h)$ is reducible but not flat, this is a Locally Conformally Product (LCP) structure. The so-called characteristic group of these manifolds, which is a connected abelian Lie group, is the key to understand how they are built. We focus in this paper on the case where this group is simply connected, and give a description of the corresponding LCP structures. It appears that they are quotients of trivial $\mathbb{R}^p$-principal bundles over simply-connected manifolds by certain discrete subgroups of automorphisms. We prove that, conversely, it is always possible to endow such quotients with an LCP structure.
\end{abstract}

\section{Introduction}

A {\em similarity structure} on a compact manifold $M$ is the data of a Riemannian metric $h$ on its universal cover $\tilde M$ for which the deck-transformations are similarities (also called homotheties). Similarity structures can be divided in two families: those coming from lifts of Riemannian metrics on $M$ to $\tilde M$, for which the deck-transformations are isometries, and those where at least one of these transformations is a strict similarity with ratio less than $1$. In the first case, the structure is said to be Riemannian and its study belongs to Riemannian geometry. The second case, in which we will be interested in this article, is a pure subject of conformal geometry. We then restrict ourselves to the non-Riemannian case.

Similarity structures are quite rigid from the point of view of the holonomy group of $(\tilde M, h)$, and until very recently only flat and irreducible examples were known. This observation together with a previous result from Gallot \cite{Gal} on Riemannian cones led Belgun and Moroianu \cite{BeMo} to formulate the conjecture that those where the only possibilities, a statement that they proved under an additional assumption on the existing-time of geodesics. However, it appeared that a third case can occur, closing the list of possible holonomies as shown by Matveev and Nikolayevsky \cite{MN,MN2} in the analytic case and by Kourganoff \cite{Kou} in the smooth case. The last possible family actually contains manifolds for whose $(\tilde M, h)$ is reducible and admits a de Rham decomposition with two factors. More precisely, $(\tilde M, h) = \RR^q \times (N, g_N)$ where $\RR^q$ is an Euclidean space and $(N, g_N)$ is irreducible and incomplete. The manifolds belonging to this family are called {\em Locally Conformally Product} manifolds (which will be shorten to LCP in the sequel). LCP manifolds are also characterized as the ones whose similarity structure has reducible holonomy and is a Riemannian product with one of the factors being a complete Euclidean space \cite{FlaLCP}, thanks to a classification of flat similarity structures by Fried \cite{Frie}.

One can give a different approach of LCP manifolds using conformal geometry, which is equivalent to the previous exposition. Indeed, a non-Riemannian similarity structure can be defined by endowing the compact manifold $M$ with a conformal structure $c$ and a connection $D$ on $M$ preserving the conformal class, such that $D$ is always locally but not globally the Levi-Civita connection of a metric in $c$. The lifted connection $\tilde D$ on $\tilde M$ is then globally the Levi-Civita of a metric $h$ in the lifted conformal class $\tilde c$, uniquely defined up to a multiplication by a constant. This point of view enlighten some similarities with the intensively studied Locally Conformally Kähler (LCK) manifolds. For this reason, one of the first known examples of LCP manifolds was a subfamily of the OT-manifolds \cite{OT}, where the algebraic number field used for the construction has exactly $2$ complex embeddings. In this case, one can define a Kähler potential on the universal cover of the manifold inducing a non-Riemannian similarity structure with reducible non-flat holonomy. This example was further studied in \cite{FlaLCP} where it was shown that all the OT-manifolds admit LCP structures.

The OT-manifolds are moreover the only known examples of LCP manifolds admitting a compatible LCK structure. However, they do not admit a Kähler metric belonging to the induced conformal class. It has been proved in \cite{BFM} that the conformal class of an LCP manifold does not contain a Kähler or an Einstein metric, illustrating again the strong rigidity of these structures.

One can then observe that for a compact manifold to admit a non-Riemannian similarity structure already imposes important constraints, but the LCP manifolds are even more restrictive, and only few examples were given until a previous work of the author \cite{FlaLCP}. It seems that, despite the variety of examples that one can construct, we always need the same basic ingredients, giving hope for the possibility of finding a general construction pattern providing a good understanding of these structures. In particular, all examples come from solvmanifolds, a track which was followed by Andrada, del Braco and Moroianu \cite{ABM} in order to describe the LCP manifolds arising from solvable unimodular Lie algebras up to dimension $5$. Our goal in this paper is to continue investigating, in order to obtain a general construction. For this, we will start from objects introduced in previous works and characterizing LCP structures.

In order to prove that the number of possibilities for the holonomy of $(\tilde M, h)$ is limited, Kourganoff \cite{Kou} used an important tool on LCP manifolds, which is the restriction $P$ of the fundamental group $\pi_1(M)$ to the non-flat part $N$. The closure $\bar P$ of this restriction is a subgroup of the group of similarities of $(N, g_N)$, and its identity component ${\bar P}^0$ is an abelian Lie group containing only isometries. The group $\RR^q \times {\bar P}^0$, where $\RR^q$ has to be understood as the translations of the Euclidean space, is called the {\em characteristic group}. It consists of isometric transformations of $(\tilde M, h)$ and encodes important information on the LCP structure. In particular it helps understanding the action of the fundamental group. For instance, using the action of $\pi_1(M)$ by conjugation on the characteristic group, it was shown in \cite{FlaLCP} that the similarity ratios of elements of $\pi_1(M)$ are units of an algebraic number field.

The characteristic group  being a connected abelian Lie group, it is a product between an Euclidean space and a torus. However, all examples provided so far have simply connected characteristic group, suggesting the significance of understanding this particular case. In this article, we then focus on giving a description of LCP manifolds with simply connected characteristic groups, using the remarkable fact that this group then acts freely and properly on the manifold $\tilde M$, thus implying the existence of a new decomposition of $\tilde M$ as a product $\RR^p \times C$ with $\RR^q \subset \RR^p$, no longer adapted to the metric, but allowing a nice understanding of the action of $\pi_1(M)$.

A glance at the known examples lets us expect that the deck-transformations admit the following description: there exists a discrete group $H$ acting freely, properly and co-compactly on the factor $C$ and a subgroup $\Omega$ of $\mathrm{GL}_p (\ZZ) \times H$ such that
\begin{equation}
\pi_1(M) = \ZZ^p \rtimes \Omega,
\end{equation}
where $\ZZ^p$ acts on $\RR^p$ by translations. However, the situation appears to be less ideal, and the group $\Omega$ can take values in the automorphisms of the trivial $(S^1)^p$-principal bundle over $C$, lifted to $\RR^p \times C$. The group $\pi_1(M)$ is then not always a semi-direct product. Moreover, the action of the group $\Omega$  on $C$ is not necessarily free, turning $C/\Omega$ into a good compact orbifold rather than a manifold (see Theorem~\ref{admissibledata}). It is then natural to ask if, given a good compact orbifold together with a suitable lift of its fundamental group to the automorphisms of a trivial torus bundle over its universal cover, one can in turn construct an LCP manifold. The answer is positive, as we show in Theorem~\ref{converse}. In this process, we also describe all the possible LCP structures by providing a way to construct the metrics for which the deck-transformations act by similarities. In the last section, we discuss new examples and open questions, showing why some results we could conjecture are actually false. We also give a necessary and sufficient condition for the construction of an LCP structure when $C/\Omega$ is a manifold (see Proposition~\ref{existencemorph}).

\section{Preliminaries} \label{preliminaries}

Let $M$ be a manifold endowed with a conformal structure, i.e. a set $c$ 	of Riemannian  metrics such that for all $g, g' \in c$, there exists $f : M \to \RR$ satisfying $g = e^{2f} g'$.

A Weyl connection on the conformal manifold $(M,c)$ is a a torsion-free connection $D$ such that $D$ preserves $c$ in the sense that for any $g \in c$, there exists a $1$-form $\theta_g \in \Omega^1 (M)$ with
\begin{equation}
D g = - 2 \theta_g \otimes g.
\end{equation}
In this case, $\theta_g$ is called the Lee form of $D$ with respect to $g$.

If there is $g \in c$ such that $\theta_g$ is closed, then $\theta_{g'}$ is closed for any metric $g' \in c$ and this is equivalent to $D$ being locally the Levi-Civita connection of a metric in $c$, which means that for any $x \in M$ there is an open set $U \subset M$ with $x \in U$ and a metric $g \in c$ such that the Levi-Civita connection of $g$ coincides with $D$ on $U$. The Weyl connection $D$ is then said to be {\em closed}. This statement holds globally if and only if there is a metric $g  \in c$ such that $\theta_g$ is exact and in this case, $D$ is said to be {\em exact}.

We recall that a similarity (also called a homothety) between two Riemannian manifolds $(M_1, g_1)$, $(M_2, g_2)$ is a diffeomorphism $\varphi: M_1 \to M_2$ satisfying
\begin{equation}
\varphi^* g_2 = \lambda^2 g_1
\end{equation}
for some $\lambda > 0$ called {\em the similarity ratio of} $\varphi$. A similarity structure on $M$, as defined in \cite{Kou}, is a metric $h$ on the universal cover $\tilde M$ such that $\pi_1(M)$ acts by similarities on $(M, h)$. When $\pi_1(M)$ acts only by isometries, this similarity structure is called Riemannian. If the Weyl connection $D$ is closed, its lift $\tilde D$ to the universal cover $\tilde M$ of $M$ together with the lifted conformal class $\tilde c$ is an exact Weyl connection, and there exists a metric $h_D \in \tilde c$, unique up to multiplication by a constant, such that $\tilde D$ is the Levi-Civita connection of $h_D$. The fundamental group $\pi_1(M)$ acts on $(\tilde M, h_D)$ by similarities, defining a similarity structure on $M$. As it was discussed in \cite{FlaLCP}, there is a one-to-one correspondence between similarity structures up to constant multiplication and closed Weyl structure on $M$. Through this identification, Riemannian similarity structures correspond to exact Weyl structures.

From now on, we consider a compact conformal manifold $(M, c)$ endowed with a closed, non-exact Weyl connection $D$, which is non-flat and has reducible holonomy. The structure $(M, c, D)$ is called a Locally Conformally Product structure (or LCP for short) \cite{FlaLCP}. A theorem due to Kourganoff \cite[Theorem 1.5]{Kou} allows one to understand the structure of these LCP manifolds by looking at the Riemannian metric $h_D$ induced on $\tilde M$ by $D$.

\begin{theorem}[Kourganoff] \label{LCPTh}
Let $D$ be a closed, non-exact Weyl structure on a compact conformal manifold $(M, c)$. Assume moreover that $D$ is non-flat and has reducible holonomy. Then, there exists $q \ge 1$ and an irreducible incomplete Riemannian manifold $(N, g_N)$ such that the universal cover $\tilde M$ of $M$ endowed with the metric $h_D$ induced by $D$ is isometric to the Riemannian product $\RR^q \times (N, g_N)$.
\end{theorem}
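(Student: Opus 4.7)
My approach is to combine the de Rham decomposition of $(\tilde M, h_D)$ with the action of $\pi_1(M)$ by similarities, and to use the non-exactness of $D$ to force a strict similarity which is then controlled by Fried's classification of flat similarity structures.

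Since $\tilde D$ is the Levi-Civita connection of $h_D$ and has the same (reducible) holonomy as $D$, the de Rham theorem applied to the simply connected manifold $\tilde M$ yields an isometric splitting
\[
(\tilde M, h_D) \cong (E, g_E) \times (N_1, g_1) \times \cdots \times (N_k, g_k),
\]
where $(E, g_E)$ is flat and each $(N_i, g_i)$ is irreducible and non-flat. Every deck transformation $\gamma \in \pi_1(M)$ preserves $\tilde D$ and acts as a similarity of ratio $\lambda_\gamma > 0$, so the homothetic version of the uniqueness of the de Rham decomposition forces $\gamma$ to permute the factors and restrict to a similarity on each preserved factor.

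The next step is to localise the strict similarities on the flat factor: on an irreducible non-flat factor $(N_i, g_i)$ a similarity of ratio $\lambda \neq 1$ would rescale the sectional curvature by $\lambda^{-2}$, and iterating it, combined with the cocompactness of the $\pi_1(M)$-action on $M$, would produce unbounded curvature fluctuations inside a bounded region, which is impossible. Hence every $\gamma$ is an isometry on each $N_i$, and the strict similarity ratios are concentrated on $E$. Non-exactness of $D$ supplies some $\gamma_0$ with $\lambda_{\gamma_0} \neq 1$, so the induced action on $E$ contains a strict similarity, and Fried's classification of compact flat similarity manifolds \cite{Frie} identifies $E$ with the complete Euclidean space $\RR^q$ for some $q \geq 1$.

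Finally, I need to show that there is exactly one non-flat factor and that it is incomplete. For uniqueness, I would argue that if two irreducible non-flat factors were present, the cocompact isometric action of $\pi_1(M)$ on their product combined with the strict similarity on $\RR^q$ would force one of them to carry a further reduction incompatible with its irreducibility, or would allow one to assemble a closed primitive of the Lee form of $D$ contradicting non-exactness. For incompleteness, I would reason by contradiction: if $N$ were complete, a centraliser argument on the strict similarity in the isometry group of $\RR^q \times N$, together with the properness of the $\pi_1(M)$-action, would produce a $\pi_1(M)$-invariant metric in the lifted conformal class $\tilde c$, forcing $D$ to be exact. Making these two arguments rigorous is the main obstacle and constitutes the technical heart of Kourganoff's original proof, which combines Fried's structure theorem with a detailed analysis of the holonomy representation and of the characteristic group of the structure.
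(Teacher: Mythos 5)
The paper does not actually prove this statement: it is quoted from Kourganoff \cite[Theorem 1.5]{Kou} and used as a black box, so there is no internal argument to compare yours against. Judged on its own merits, your proposal contains a step that is not merely incomplete but false. You claim that every deck transformation must act as an isometry on each irreducible non-flat de Rham factor, so that ``the strict similarity ratios are concentrated on $E$''. But a similarity $\gamma$ of the product metric $h_D = g_E + g_1 + \cdots + g_k$ with ratio $\lambda$ that preserves the splitting satisfies $\gamma^* h_D = \lambda^2 h_D$ and therefore restricts to a similarity of the \emph{same} ratio $\lambda$ on every factor; the ratio cannot be concentrated on one factor. In the actual structure of an LCP manifold the group $P$ of restrictions of $\pi_1(M)$ to the non-flat part does contain strict similarities (this is precisely why $N$ is forced to be incomplete), so your intermediate conclusion contradicts the theorem you are trying to prove. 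The curvature blow-up argument you invoke yields no contradiction, because the sectional curvature of $h_D$ is not a well-defined function on the compact quotient $M$ (only the connection $D$ and the conformal class descend, not the metric), and indeed the curvature of the non-flat part of a genuine LCP manifold is unbounded near its metric boundary.

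There are further gaps. The global de Rham splitting of $(\tilde M, h_D)$ cannot be obtained from the classical de Rham theorem, because $h_D$ is not complete (it is not the lift of a metric on the compact $M$); establishing the splitting in this incomplete setting is one of the genuine difficulties of Kourganoff's proof. Fried's classification applies to compact manifolds whose whole similarity structure is flat, not to the flat factor $E$ of a non-flat $\tilde M$, so invoking it to identify $E$ with a complete $\RR^q$ is unjustified. Finally, the uniqueness of the non-flat factor and its incompleteness --- which you yourself acknowledge constitute the technical heart of the theorem --- are left entirely as sketches. As it stands the proposal is not a proof.
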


It was proved in \cite{FlaLCP} that $(M,c,D)$ is an LCP structure if and only if $(\tilde M, D)$ is reducible and has a Riemannian factor which is an Euclidean space. This fact will be often used in order to show that the examples we will give are indeed LCP manifolds.

According to Theorem~\ref{LCPTh}, one has $(\tilde M, h_D) \simeq \RR^q \times (N, g_N)$, where $\RR^q$ and $(N, g_N)$ will respectively be called {\em the flat part} and the {\em non-flat part} of $\tilde M$. This can be reformulated by saying that $(\tilde M, h_D)$ has a de Rham decomposition with 2 factors, and since $\pi_1(M)$ preserves the connection $\tilde D$, it must preserves the de Rham decomposition up to the order of the factors, but it must also preserves the flat factor $\RR^q$. Consequently, $\pi_1(M)$ preserves the de Rham decomposition, and any $\gamma \in \pi_1(M)$ can be written as $\gamma =: (\gamma_E, \gamma_N)$ where $\gamma_E$ and $\gamma_N$ are similarities of $\RR^q$ and $N$ respectively. We then define $P := \lbrace \gamma_N \ \vert \ \gamma \in \pi_1(M) \rbrace$, and we denote by $\bar P$ the closure of $P$ in $\mathrm{Sim} (N, g_N)$ with respect to the compact-open topology.

The connected component of the identity in $\bar P$ is denoted by ${\bar P}^0$. It has been shown in \cite[Lemma 4.1]{Kou} that ${\bar P}^0$ is an abelian Lie group, satisfying ${\bar P}^0 \subset \mathrm{Iso} (N, g_N)$. In particular, it is the product of a real vector space and a torus.

The decomposition $\tilde M = \RR^q \times N$ induces a natural foliation $\tilde {\mathcal F}$ on $\tilde M$ via the submersion $\tilde M \to N$, whose leaves are the sets $\tilde {\mathcal F}_x := \RR^q \times \{ x \}$ for $x \in N$. In turn, the foliation $\tilde {\mathcal F}$ induces a foliation $\mathcal F$ on the compact manifold $M$. The closures of the leaves of $\mathcal F$ define a singular Riemannian foliation $\bar{\mathcal F}$ on $M$ \cite[Theorem 1.9]{Kou}, and if one denotes by $\pi_M : \tilde M \to M$ the canonical projection, the leaves of $\bar {\mathcal F}$ are exactly the sets $\pi_M (\RR^q \times {\bar P}^0 x)$ for $x \in N$ \cite[Lemma 4.11]{Kou}. In view of this last property we define for $x \in N$ the set $\tilde {\mathcal{CF}}_x := \RR^q \times {\bar P}^0 x$, so that $\tilde {\mathcal{CF}} = \{\tilde {\mathcal{CF}}_x \ \vert \ x \in N \}$ is a singular foliation of $\tilde M$. Since ${\bar P}^0$ is an abelian Lie group which acts by isometries on $(N, g_N)$, the Riemannian manifolds $\RR^q \times {\bar P}^0 x$, $x \in N$, with the metric induced by $g_N$ are products of an Euclidean space with a flat torus, as it was noticed in \cite{Kou} and \cite{FlaLCP}, however their dimensions depend on $x$ a priori.

We call a {\em lattice} of an abelian Lie group $G$ any discrete subgroup $H$ of $G$. If $G/H$ is compact, then $H$ is called a {\em full lattice}. It was shown in \cite[Lemma 4.18]{Kou} that the group
\begin{equation}
\Gamma_0 := \pi_1(M) \cap (\mathrm{Sim} (\RR^q) \times {\bar P}^0)
\end{equation}
is a full lattice of $\RR^q \times {\bar P}^0$. In particular, it is an abelian subgroup of $\mathrm{Isom}(\tilde M, h_D)$. In addition, it was shown in \cite[Lemma 2.10]{FlaLCP}, by adapting the incorrect proof of \cite[Lemma 4.17]{Kou}, that $P$ is isomorphic to $\pi_1(M)$. Since ${\bar P}^0$ is a normal subgroup of $\bar P$,  being its identity component, $\Gamma_0$ is a normal subgroup of $\pi_1(M)$ by definition.

We will study in this article the case where ${\bar P}^0 \simeq \RR^{p-q}$ for some $p \ge q$, in order to give a precise description of these manifolds.

\section{General results} \label{SecGeneral}

\subsection{Properties of some group actions on LCP manifolds}
Let $(M, c, D)$ be an LCP manifold. We keep the notations of the preliminary section. The group $\Gamma_0$ is a finitely generated abelian group since it is a lattice of $\RR^q \times {\bar P}^0$, thus $\Gamma_0 = \Gamma_0^{tor} \oplus \ZZ^p$ where $\Gamma_0^{tor}$ is the torsion subgroup of $\Gamma_0$ and $p \ge q$. Our first goal in this section is to give a special decomposition of the universal cover $\tilde M$ of $M$. For this purpose, we consider a representative $\Gamma_0^F$ of $\Gamma_0 / \Gamma_0^{tor} \simeq \ZZ^p$ in $\Gamma_0$ with basis $(\gamma_1, \ldots, \gamma_p)$. We denote by "$\exp$" the exponential map of the Lie group $\RR^q \times {\bar P}^0$, and we consider $(X_1, \ldots, X_p) \in (T_e (\RR^q \times {\bar P}^0))^p$, where $e$ is the neutral element, such that $\gamma_i = \exp (X_i)$ for any $1 \le i \le p$. The subgroup $\exp^{-1}(\Gamma_0)$ is a full lattice of $T_e (\RR^q \times {\bar P}^0)$ and one easily sees that $\exp^{-1}(\Gamma_0) = \langle X_1, \ldots, X_p \rangle \oplus \exp^{-1}(\Gamma_0^{tor})$, thus $T_e (\RR^q \times {\bar P}^0) = \mathrm{Span}(X_1, \ldots, X_p) \oplus \mathrm{Span}(\exp^{-1}(\Gamma_0^{tor}))$.

We define the subgroup $F$ of $\RR^q \times {\bar P}^0$ by
\begin{equation} \label{defF}
F := \exp (\mathrm{Span} (X_1, \ldots, X_p)).
\end{equation}
We claim that $\exp : \mathrm{Span} (X_1, \ldots, X_p) \to F$ is an isomorphism. Indeed, it is sufficient to prove that the map is into. But for any $X \in \mathrm{Span} (X_1, \ldots, X_p)$ such that $\exp (X) = 0$, one has $X \in \exp^{-1}(\Gamma_0^{tor}) \cap \mathrm{Span}(X_1, \ldots, X_n) = \{ 0 \}$, so $X = 0$ and we deduce the injectivity.

In addition, $F$ is a closed subgroup. Indeed, if we write $\RR^q \times {\bar P}^0$ as the product $\RR^p \times (S^1)^m$ of an Euclidean space and a torus, the projection of the basis $(\gamma_1, \ldots, \gamma_p)$ onto $\RR^p$ is a basis of $\RR^p$ because otherwise $\Gamma_0$ would not be a full lattice of $\RR^q \times {\bar P}^0$. From the point of view of the Lie algebra, it implies that the projection of $(X_1, \ldots, X_p)$ onto the Lie algebra of $\RR^p$ is a basis. We then easily define a continuous bijection between $\RR^p$ and $\mathrm{Span} (X_1, \ldots, X_p) \simeq F$, proving that $\RR^p \simeq F$ and that $F$ is closed.

The group $F$ thus represents the non-compact part of the group $\RR^q \times {\bar P}^0$. In order to show that $F$ acts freely on $\tilde M$, we first prove two technical lemmata:

\begin{lemma} \label{techlattice}
Let $\Gamma \subset \Gamma'$ be two full lattices of $\RR^p$. Then, for any $\gamma \in \Gamma'$ there exists $r \ge 1$ such that $\gamma^r \in \Gamma$.
\end{lemma}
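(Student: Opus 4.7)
The plan is to reduce the statement to proving that the quotient group $\Gamma'/\Gamma$ is finite. Once this is established, for any $\gamma \in \Gamma'$ the coset of $\gamma$ in $\Gamma'/\Gamma$ has some finite order $r \ge 1$, which in additive notation for $\RR^p$ means exactly $r\gamma \in \Gamma$; in the multiplicative notation of the statement, this is $\gamma^r \in \Gamma$.

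To prove that $\Gamma'/\Gamma$ is finite, I would pick a $\ZZ$-basis $(e_1, \ldots, e_p)$ of the full lattice $\Gamma$. Since $\Gamma$ has rank $p$ and spans $\RR^p$, this basis is automatically an $\RR$-basis of $\RR^p$, and
\begin{equation}
F := \left\{ \sum_{i=1}^p t_i e_i \ \Big\vert \ t_i \in [0,1] \right\}
\end{equation}
is a compact fundamental domain for the translation action of $\Gamma$ on $\RR^p$. Because $\Gamma'$ is a discrete subgroup of $\RR^p$ and $F$ is compact, the intersection $\Gamma' \cap F$ is finite.

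The last step is to exhibit a surjection $\Gamma' \cap F \twoheadrightarrow \Gamma'/\Gamma$, which then gives $|\Gamma'/\Gamma| \le |\Gamma' \cap F| < \infty$. For any $\gamma \in \Gamma'$, by the defining property of $F$ there exists $\gamma_0 \in \Gamma$ such that $\gamma - \gamma_0 \in F$. The key observation, which is the only place where the inclusion $\Gamma \subset \Gamma'$ is used, is that $\gamma_0 \in \Gamma \subset \Gamma'$ forces $\gamma - \gamma_0 \in \Gamma'$, so $\gamma - \gamma_0 \in \Gamma' \cap F$ and represents the same coset as $\gamma$. I do not anticipate any real obstacle in this argument; it is essentially the classical fact that two full lattices in a real vector space are commensurable.
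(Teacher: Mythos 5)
Your proof is correct. Both you and the paper reduce the lemma to the finiteness of the quotient group $\Gamma'/\Gamma$, after which the conclusion is immediate: every element of a finite group has finite order, and $r\gamma \in \Gamma$ is exactly the statement $\gamma^r \in \Gamma$ in the paper's multiplicative notation. Where you diverge is in how finiteness is established. The paper observes that $\RR^p/\Gamma \to \RR^p/\Gamma'$ is a covering map with fiber $\Gamma'/\Gamma$ and that a covering between compact spaces must be finite, so the whole argument is two lines of covering-space theory. You instead give a hands-on count: a $\ZZ$-basis of the full lattice $\Gamma$ is an $\RR$-basis of $\RR^p$, the associated parallelepiped $F$ is a compact fundamental domain, the discrete subgroup $\Gamma'$ meets $F$ in finitely many points, and translating any $\gamma \in \Gamma'$ into $F$ by an element of $\Gamma \subset \Gamma'$ shows these finitely many points surject onto $\Gamma'/\Gamma$. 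Your version is more elementary and self-contained (it uses no topology beyond discreteness), at the cost of being slightly longer; the paper's version is shorter but leans on the standard facts that the quotient map is a covering and that compactness forces the fiber to be finite. Either argument is a complete and valid proof of the commensurability of nested full lattices.
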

\begin{proof}
The space $\RR^p/\Gamma$ is a covering of $\RR^p/\Gamma'$ with fiber $\Gamma'/\Gamma$. This is a finite covering because both spaces are compact, thus $\Gamma'/\Gamma$ is finite and each one of its elements has finite order, proving the lemma.
\end{proof}

\begin{lemma} \label{tech2}
Let $\mathcal M$ be a smooth manifold on which acts the group $\RR^p$. Assume that a full lattice of $\RR^p$ acts freely and properly on $\mathcal M$. Then, $\RR^p$ acts freely on $\mathcal M$.
\end{lemma}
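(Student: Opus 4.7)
The plan is to argue by contradiction. Suppose some $v \in \RR^p \setminus \{0\}$ fixes a point $x \in \mathcal{M}$, and consider the subgroup
\[
G := \ZZ v + \Gamma \subset \RR^p,
\]
where $\Gamma$ denotes the full lattice hypothesized to act freely and properly. My goal is to show that $G$ is a discrete subgroup of $\RR^p$ containing $\Gamma$; it will then automatically be a full lattice, and Lemma~\ref{techlattice} will produce $r \geq 1$ with $rv \in \Gamma$. Since $v$ fixes $x$, so does $rv$, and freeness of the $\Gamma$-action forces $rv = 0$, hence $v = 0$, contradicting our assumption.

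The main technical step is to show that $G$ is closed in $\RR^p$. Take a sequence $g_n = m_n v + \gamma_n \in G$ converging to some $g \in \RR^p$. Because $v$ (and therefore $m_n v$) fixes $x$, one has $g_n \cdot x = \gamma_n \cdot x$, and continuity of the $\RR^p$-action gives $\gamma_n \cdot x \to g \cdot x$ in $\mathcal{M}$. The freeness and properness of the $\Gamma$-action imply that the orbit map $\gamma \mapsto \gamma \cdot x$ is a proper injection of the discrete group $\Gamma$ into $\mathcal{M}$; in particular, only finitely many $\gamma \in \Gamma$ send $x$ into a given compact set. Applied to a compact neighborhood of $g \cdot x$, this shows $\{\gamma_n\}$ takes finitely many values, so after extracting we may assume $\gamma_n$ is constant equal to some $\gamma$. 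Then $m_n v \to g - \gamma$ in $\RR^p$; since $v \neq 0$, the integer sequence $m_n$ must converge, hence be eventually constant at some $m \in \ZZ$, yielding $g = \gamma + m v \in G$.

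Once closedness is in hand, the structure theorem for closed subgroups of $\RR^p$ writes $G \cong \RR^a \times \ZZ^b$ with $a+b \le p$. But $G$ is generated by $v$ together with a basis of $\Gamma$, hence finitely generated, which rules out any $\RR^a$-factor and forces $a = 0$. Thus $G$ is discrete, and since it contains the full lattice $\Gamma$, it is itself a full lattice; Lemma~\ref{techlattice} then closes the argument. I expect the closedness step to be the only real obstacle, the remaining pieces being standard consequences of the classification of closed subgroups of $\RR^p$ together with the preceding lemma.
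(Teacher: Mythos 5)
Your proof is correct, and it reaches the same endgame as the paper (produce a full lattice of $\RR^p$ containing both $\Gamma$ and the offending stabilizer element, apply Lemma~\ref{techlattice}, then use freeness of $\Gamma$ and torsion-freeness of $\RR^p$), but the crucial discreteness step is handled differently. The paper works with the whole stabilizer $S(x)$ at once: it identifies the orbit $\RR^p\cdot x$ with $\RR^p/S(x)$, observes that $\Gamma$ embeds as a full lattice there, and deduces $\dim(\RR^p/S(x))=p$ from the rank bound for lattices, so that $S(x)$ is discrete and $\langle S(x),\Gamma\rangle$ is a full lattice. You instead fix a single $v\in S(x)\setminus\{0\}$ and show directly that $\ZZ v+\Gamma$ is closed by a sequence argument: properness and freeness of the $\Gamma$-action pin down $\gamma_n$ to finitely many values, and convergence of $m_nv$ forces the integers $m_n$ to stabilize; closedness plus countability (or finite generation) then rules out a connected factor. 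Your route is more hands-on and avoids both the orbit identification and the rank inequality for lattices in quotient groups, at the mild cost of treating one stabilizer element at a time and invoking the classification of closed subgroups of $\RR^p$; the paper's version is shorter once one accepts that the image of $\Gamma$ in $\RR^p/S(x)$ is a full lattice. Both are valid, and each makes the same essential use of properness of the $\Gamma$-action.
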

\begin{proof}
We denote by $\Gamma$ the full lattice of the statement. Let $x \in \mathcal M$ and consider the set $\RR^p \cdot x$. Let $S(x) = \{ a \in \RR^p \ \vert \ a \cdot x = x \}$ be the stabilizer of $x$ in $\RR^p$. Then one has $\RR^p \cdot x \simeq \RR^p/S(x)$. We want to prove that $S(x)$ only contains the identity.

By definition, the free abelian group $\Gamma$ is a full lattice of $\RR^p$ and $\Gamma$ acts freely and properly on $\mathcal M$. Since $\Gamma$ stabilizes $\RR^p \cdot x$, one has that $\Gamma$ acts freely and properly on $\RR^p \cdot x$, so it is a full lattice of the abelian group $\RR^p/S(x)$. A lattice of an abelian lie group has a rank lower than the dimension of the group, so $p \le \mathrm{dim} (\RR^p/S(x))$, and moreover $\mathrm{dim} (\RR^p/S(x)) \le p$ because $\mathrm{dim} (\RR^p) = p$, thus $\mathrm{dim} (\RR^p/S(x)) = p$ and $S(x)$ is a discrete subgroup of $\RR^p$.

One has $(\RR^p\cdot x) / \Gamma \simeq \RR^p/\langle S(x), \Gamma \rangle$, where $\langle S(x), \Gamma \rangle$ is the group generated by $S(x)$ and $\Gamma$. In particular $\langle S(x), \Gamma \rangle$ is a full lattice of  $\RR^p$.

We now pick $a \in S(x)$. By Lemma~\ref{techlattice} applied to the full lattices $\Gamma \subset \langle S(x), \Gamma \rangle$, there exists $r \ge 1$ such that $a^r \in \Gamma$. Since $a^r \cdot x = x$ and $\Gamma$ acts freely, one has that $a^r = \mathrm{id}$. In addition, $S(x)$ has no torsion because it is a subgroup of $\RR^p$ thus $a = \mathrm{id}$ and $S(x) = \{ \mathrm{id} \}$.
\end{proof}

\begin{cor} \label{Factsfree}
The group $F$ acts freely on $\tilde M$.
\end{cor}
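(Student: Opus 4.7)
The plan is to apply Lemma~\ref{tech2} directly, with $\mathcal{M} = \tilde M$, the ambient $\RR^p$ identified with $F$ via the isomorphism $\exp : \mathrm{Span}(X_1, \ldots, X_p) \xrightarrow{\sim} F$ constructed just above, and the full lattice taken to be $\Gamma_0^F = \langle \gamma_1, \ldots, \gamma_p \rangle$. So the task reduces to verifying the hypotheses of that lemma.

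First, $F$ does act on $\tilde M$: it is a subgroup of $\RR^q \times {\bar P}^0$, which acts on $\tilde M = \RR^q \times N$ through translations on the flat factor and isometries on $N$. Next, $\Gamma_0^F$ is a full lattice of $F \simeq \RR^p$. Indeed, $\Gamma_0^F$ is by construction a representative of $\Gamma_0/\Gamma_0^{tor} \simeq \ZZ^p$ with basis $(\gamma_1, \ldots, \gamma_p) = (\exp(X_1), \ldots, \exp(X_p))$, and since $\exp : \mathrm{Span}(X_1, \ldots, X_p) \to F$ is a group isomorphism (identifying $F$ with $\RR^p$), it sends the $\ZZ$-lattice $\langle X_1, \ldots, X_p\rangle_\ZZ$ bijectively onto $\Gamma_0^F$, which is therefore a full lattice in $F$.

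Finally, $\Gamma_0^F$ acts freely and properly on $\tilde M$. This is immediate from the inclusion $\Gamma_0^F \subset \Gamma_0 \subset \pi_1(M)$: the fundamental group acts on the universal cover $\tilde M$ as its group of deck transformations, hence freely and properly discontinuously, and restriction to a subgroup preserves both properties. Lemma~\ref{tech2} then applies verbatim and yields that $F \simeq \RR^p$ acts freely on $\tilde M$.

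There is no real obstacle here: once the preceding discussion has established the isomorphism $\exp : \mathrm{Span}(X_1, \ldots, X_p) \to F$ and the fact that $\Gamma_0^F$ sits as a full lattice in $F$, the corollary is a direct unpacking of Lemma~\ref{tech2}. The only point worth double-checking is the compatibility between the algebraic identification $F \simeq \RR^p$ and the geometric action on $\tilde M$ used in Lemma~\ref{tech2}, but this is automatic since both come from the natural action of $\RR^q \times {\bar P}^0$ on $\tilde M$.
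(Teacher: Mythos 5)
Your argument is correct and is exactly the paper's proof: apply Lemma~\ref{tech2} to $F \simeq \RR^p$ acting on $\tilde M$, with the full lattice $\Gamma_0^F$ acting freely and properly because $\Gamma_0^F \subset \Gamma_0 \subset \pi_1(M)$. The extra verifications you include (that $\Gamma_0^F$ is a full lattice of $F$ via the isomorphism $\exp$, and the compatibility of the identification with the action) are sound and merely make explicit what the paper leaves implicit.
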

\begin{proof}
We apply Lemma~\ref{tech2} to the action of $F \simeq \RR^p$ on $\tilde M$, knowing that the full lattice $\Gamma_0^F$ of $F$ acts freely and properly on $\tilde M$ because $\Gamma_0^F \subset \Gamma_0 \subset \pi_1(M)$.
\end{proof}

\begin{cor} \label{PSimCon}
If ${\bar P}^0$ is simply connected, then $\RR^q \times {\bar P}^0$ acts freely on $\tilde M$. In particular, ${\bar P}^0$ acts freely on $N$.
\end{cor}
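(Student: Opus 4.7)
The plan is to reduce the statement to Corollary~\ref{Factsfree} by showing that, under the simple connectedness hypothesis, the subgroup $F$ constructed above actually coincides with the full group $\RR^q\times{\bar P}^0$.

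First I would observe that ${\bar P}^0$, being a connected abelian Lie group, decomposes as a product of a Euclidean space and a torus; simple connectedness forces the torus factor to be trivial, so ${\bar P}^0 \simeq \RR^{p-q}$ and hence $\RR^q\times{\bar P}^0 \simeq \RR^p$ is itself simply connected and torsion-free as a group. Next, I would exploit the fact that $\Gamma_0$ is a full lattice in $\RR^q\times{\bar P}^0$: since any discrete subgroup of a Euclidean space is torsion-free, $\Gamma_0^{tor}=\{e\}$ and $\Gamma_0 \simeq \ZZ^p$, with the $p$ appearing in the definition of $F$ agreeing with the ambient dimension $q+\dim{\bar P}^0$.

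With these two facts in hand, the argument closes quickly. The subspace $\mathrm{Span}(X_1,\ldots,X_p) \subset T_e(\RR^q\times{\bar P}^0)$ has dimension $p$ and sits inside a $p$-dimensional vector space, so it equals the whole Lie algebra. Since the exponential map of a connected abelian Lie group is surjective, we obtain
\[
F \;=\; \exp\bigl(\mathrm{Span}(X_1,\ldots,X_p)\bigr) \;=\; \exp\bigl(T_e(\RR^q\times{\bar P}^0)\bigr) \;=\; \RR^q\times{\bar P}^0.
\]
Corollary~\ref{Factsfree} then gives the first assertion.

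For the ``in particular'' part I would simply restrict the free action. The action of $\RR^q\times{\bar P}^0$ on $\tilde M = \RR^q\times N$ splits as translations on the first factor and the given action of ${\bar P}^0$ on $N$ on the second; if some $g\in{\bar P}^0$ fixed a point $x\in N$, then $(0,g)\in\RR^q\times{\bar P}^0$ would fix every point $(y,x)$, contradicting freeness. I do not anticipate a genuine obstacle here: the whole content lies in identifying $F$ with $\RR^q\times{\bar P}^0$, and the only place where care is needed is the dimension count ensuring that $p$, defined as the free rank of $\Gamma_0$, actually matches $\dim(\RR^q\times{\bar P}^0)$ when the compact factor of ${\bar P}^0$ is absent.
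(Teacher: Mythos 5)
Your proposal is correct and follows essentially the same route as the paper: the paper's proof is exactly the observation that simple connectedness of ${\bar P}^0$ forces $F = \RR^q \times {\bar P}^0$, after which Corollary~\ref{Factsfree} applies. You merely spell out the details (triviality of $\Gamma_0^{tor}$, the dimension count identifying $p$ with $\dim(\RR^q\times{\bar P}^0)$, surjectivity of $\exp$) that the paper leaves implicit.
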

\begin{proof}
If ${\bar P}^0$ is simply connected, the group $F$ defined in equation~\ref{defF} is equal to the whole group $\RR^q \times {\bar P}^0$, and by Corollary~\ref{Factsfree} it acts freely on $\tilde M$.
\end{proof}

We now discuss a problem tackled in \cite[Lemma 4.16]{Kou}, whose proof was shown to be incorrect. We would like to understand the action of the group $P$ on $N$. As we explained before, it is known that $P$ is isomorphic to $\pi_1(M)$. However, we don't know if the action of $P$ is free. We give here a first result which states that this is true if and only if the restricted action of $\Gamma_0$ is free. 

\begin{prop} \label{actionrestricted}
The group $P$ acts freely on $N$ if and only if the restriction of $\Gamma_0$ to $N$, which coincides with $P \cap {\bar P}^0$, acts freely.
\end{prop}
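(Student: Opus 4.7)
The forward direction of the equivalence is trivial, as any subgroup of a freely acting group still acts freely, so I would concentrate entirely on the converse: assuming $P\cap\bar P^0$ acts freely on $N$, deduce that $P$ does. Concretely, suppose $\gamma_N\in P$ fixes some $x\in N$; invoking the isomorphism $P\simeq\pi_1(M)$ recalled in the preliminaries, let $\gamma=(\gamma_E,\gamma_N)\in\pi_1(M)$ be the unique element with $N$-component $\gamma_N$. The goal is to force $\gamma_N=\mathrm{id}$, equivalently $\gamma=\mathrm{id}$.

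The first step is to observe that $\gamma$ must be an isometry of $\tilde M$. Indeed, if its similarity ratio $\lambda$ were different from $1$, then $\gamma_E$ would be a strict similarity of $\RR^q$ and would therefore have a fixed point $v\in\RR^q$; combined with $\gamma_N(x)=x$ this would give a fixed point $(v,x)$ of $\gamma$ in $\tilde M$, contradicting the freeness of the deck transformation action. So $\gamma_E\in\mathrm{Isom}(\RR^q)$, and moreover $\gamma_E$ has \emph{no} fixed point in $\RR^q$, because $\gamma_N$ already fixes $x$.

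Next, I would exploit the fact that $\gamma_N$ is an isometry of $(N,g_N)$ fixing $x$: the isotropy subgroup at $x$ inside $\mathrm{Isom}(N,g_N)$ is compact, so the closure $K:=\overline{\langle\gamma_N\rangle}$ is a compact abelian subgroup of $\bar P$ (it sits inside $\bar P$ because $\bar P$ is closed and contains $\langle\gamma_N\rangle$). Its identity component $K^0$ is a torus, and being connected it lies in $\bar P^0$. Since $K/K^0$ is finite, some power $\gamma_N^r$ belongs to $K^0\subset\bar P^0$, and it still belongs to $P$, hence to $P\cap\bar P^0$. This $\gamma_N^r$ continues to fix $x$, so the hypothesis that $P\cap\bar P^0$ acts freely on $N$ forces $\gamma_N^r=\mathrm{id}$.

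Transporting this equality across the isomorphism $P\simeq\pi_1(M)$ gives $\gamma^r=\mathrm{id}$ in $\pi_1(M)$, so in particular $\gamma_E^r=\mathrm{id}$; that is, $\gamma_E$ is a finite-order isometry of $\RR^q$. A standard center-of-mass argument (averaging the $\gamma_E$-orbit of any point) provides a fixed point of $\gamma_E$, contradicting the no-fixed-point conclusion of the second step unless $\gamma=\mathrm{id}$ from the outset, i.e.\ $\gamma_N=\mathrm{id}$. The main subtlety is identifying the correct bridge from $P$ to $P\cap\bar P^0$, namely the passage through the compact isotropy group to reach $\bar P^0$ via a power; once that bridge is in place, the rest is a clean interplay between freeness of the $\pi_1(M)$-action on $\tilde M$ and the fixed-point property of finite-order Euclidean isometries.
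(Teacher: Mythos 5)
Your argument follows a genuinely different route from the paper's: instead of passing to the quotient $\tilde M/\Gamma_0$ and exploiting the compactness of $\tilde{\mathcal{CF}}_x/\Gamma_0$ together with the proper discontinuity of $\pi_1(M)/\Gamma_0$, you work directly inside the closure $\bar P$ and reach ${\bar P}^0$ through the identity component of a compact stabilizer; the endgame (a power of $\gamma_N$ in $P\cap{\bar P}^0$ fixing $x$, then the center of mass of a finite $\gamma_E$-orbit contradicting the fixed-point-freeness of $\gamma_E$) replaces the paper's explicit affine analysis of $\gamma_E$ and is, once repaired, arguably cleaner.

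The one step that fails as written is the compactness of $K=\overline{\langle\gamma_N\rangle}$. You justify it by asserting that the isotropy subgroup at $x$ inside $\mathrm{Isom}(N,g_N)$ is compact. That is a theorem for complete (more generally, proper) Riemannian manifolds, but $(N,g_N)$ is \emph{incomplete} by Theorem~\ref{LCPTh}, and for incomplete manifolds the assertion is false in general: take $\RR^2$ minus the orbit of a point on a circle under an irrational rotation; the isometries fixing the center form a countable dense, non-closed, hence non-compact subgroup of $O(2)$, because a limit of such rotations need not preserve the deleted set and so fails to be a self-map of the manifold. (This example is flat, so it is not itself an LCP non-flat part, but it shows the general principle you invoke is unavailable here.) The repair is to use instead that $\bar P$ acts \emph{properly} on $N$ (\cite[Lemma 4.9]{Kou}, quoted in Section 4.1 of the paper): properness makes the stabilizer ${\bar P}_x=\{g\in\bar P\ \vert\ g(x)=x\}$ compact, and $K$ is a closed subgroup of ${\bar P}_x$, hence a compact abelian Lie group, after which your argument (finitely many components, $K^0\subset{\bar P}^0$ a torus, $\gamma_N^r\in P\cap{\bar P}^0$ fixing $x$, so $\gamma_N^r=\mathrm{id}$ and $\gamma^r=\mathrm{id}$) goes through verbatim. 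Note that the paper's own proof deliberately avoids this input, extracting the needed finiteness of $\{\bar\gamma^m(0,x)\}$ from the proper discontinuity of $\pi_1(M)/\Gamma_0$ on $\tilde M/\Gamma_0$; your version trades a heavier external ingredient for a shorter conclusion.
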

\begin{proof}
If $P$ acts freely on $N$, then it is easily seen that the restriction of $\Gamma_0$ to $N$ acts freely. It remains to prove the converse.

Assume that there is $\gamma \in \pi_1(M) \setminus \{\mathrm{id}\}$ and $x \in N$ such that $\gamma_N (x) = x$ (we recall that $\gamma_N$ is the part of $\gamma$ acting on $N$). The transformation $\gamma$ stabilizes the closed leaf $\tilde{\mathcal{CF}}_x$. Since $\pi_1(M)$ acts freely and properly discontinuously on $\tilde M$, the group $\pi_1(M)/\Gamma_0$ acts freely and properly discontinuously on $\tilde M/\Gamma_0$ (see \cite[Proposition 4.1]{FlaLCP} for example). We denote by $\bar\gamma$ the equivalence class of $\gamma$ in $\pi_1(M)/\Gamma_0$. As $\bar\gamma$ stabilizes the compact set $\tilde{\mathcal{CF}}_x/\Gamma_0$ (which is compact because $\Gamma_0$ is a full lattice of $\RR^q \times {\bar P}^0$ and $\tilde{\mathcal{CF}}_x = \RR^q \times {\bar P}^0 x$), the set $\{ \bar\gamma^m (0,x) \ \vert \ m \in \NN \}$ is finite because $\pi_1(M)/\Gamma_0$ acts properly on $\tilde M / \Gamma_0$. Thus there exists $m \in \NN \setminus \{0\}$ such that $\bar\gamma^m (0,x) = (0,x)$, but $\pi_1(M)/\Gamma_0$ acts freely, so $\bar\gamma^m = \mathrm{id}$ and $\gamma^m \in \Gamma_0$.

The restriction $\gamma_E$ of $\gamma$ to the flat part $\RR^q$ is of the form $\RR^q \ni a \mapsto A a + b$ where $A \in \mathrm{GL}_q(\RR)$ and $b \in \RR^q$. Since $\gamma$ has no fixed point and $\gamma_N (x) = x$, $\gamma_E$ has no fixed point. One has that $\gamma_E^m$ is a translation because $\gamma^m \in \Gamma_0$, so $A^m = I_q$, and considering the polynomial $X^m - 1 =: (X-1)R(X)$ one has $\RR^q = \ker (A - I_q) \oplus \ker R(A) =: V_1 \oplus V_2$. According to this decomposition, $b =: b_1 + b_2$, and $(I_q - A)\vert_{V_2}$ being invertible there exists a unique $v_2 \in V_2$ such that $(I_q - A) v_2 = b_2$.  One then has $\gamma_E (v_2)= v_2 + b_1$, implying that $b_1 \neq 0$. Hence $\gamma^m (v_2 , x) = (v_2 + m b_1, x)$ giving that $\gamma^m$ is a non-trivial element of $\Gamma_0$. Since $\pi_1(M)$ is isomorphic to $P$, $(\gamma^m)_N$ is a non-trivial element of $P$, which is the restriction of an element of $\Gamma_0$ to $N$ and which has $x$ as a fixed point.
\end{proof}

\begin{cor} \label{freeonN}
If ${\bar P}^0$ is simply connected, then $P$ acts freely on $N$.
\end{cor}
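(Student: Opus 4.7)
The plan is to combine Proposition~\ref{actionrestricted} with Corollary~\ref{PSimCon}. Proposition~\ref{actionrestricted} reduces the question of whether $P$ acts freely on $N$ to the question of whether the restriction of $\Gamma_0$ to $N$ acts freely, so it suffices to check the latter under the simple connectedness hypothesis.

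By definition, $\Gamma_0 = \pi_1(M) \cap (\mathrm{Sim}(\RR^q) \times {\bar P}^0)$, so every element of $\Gamma_0$ has its $N$-component in ${\bar P}^0$. Hence the restriction of $\Gamma_0$ to $N$ is a subgroup of ${\bar P}^0$. Under the assumption that ${\bar P}^0$ is simply connected, Corollary~\ref{PSimCon} asserts that ${\bar P}^0$ itself acts freely on $N$, so a fortiori every subgroup of ${\bar P}^0$ acts freely on $N$. In particular, the restriction of $\Gamma_0$ to $N$ acts freely, and Proposition~\ref{actionrestricted} then yields that $P$ acts freely on $N$.

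No step looks delicate here: the result is really a direct concatenation of the two preceding statements, the nontrivial work having already been done in Proposition~\ref{actionrestricted} (reducing freeness of $P$ to freeness of $\Gamma_0\vert_N$) and in Corollary~\ref{PSimCon} (which itself relies on Corollary~\ref{Factsfree} and the identification $F = \RR^q \times {\bar P}^0$ in the simply connected case).
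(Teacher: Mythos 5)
Your argument is exactly the paper's: invoke Corollary~\ref{PSimCon} to get that ${\bar P}^0$ acts freely on $N$, observe that $\Gamma_0\vert_N$ is a subgroup of ${\bar P}^0$ and hence also acts freely, and conclude by Proposition~\ref{actionrestricted}. The proof is correct and matches the paper's reasoning step for step.
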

\begin{proof}
Assume that ${\bar P}^0$ is simply connected. By Corollary~\ref{PSimCon}, ${\bar P}^0$ acts freely on $N$. In particular, the restriction of $\Gamma_0$ to $N$, which is contained in ${\bar P}^0$, acts freely and Proposition~\ref{actionrestricted} implies that $P$ acts freely on $N$.
\end{proof}

The results of Corollaries~\ref{PSimCon} and \ref{freeonN} motivate the following definition:

\begin{definition}
The group $\RR^q \times {\bar P}^0$ will be called the {\em characteristic group} of the LCP manifold.
The LCP structure $(M,c,D)$ is said to be {\em simple} if its characteristic group is simply connected.
\end{definition}

\begin{rem} \label{remark1} The action of the group $\pi_1(M)$ on $\tilde M$ descends to an action of $\pi_1(M) / \Gamma_0$ on $N/{\bar P}^0 \simeq \tilde M / (\RR^q \times {\bar P}^0)$ because $\RR^q \times \bar P$ is stable under conjugation by elements of $\pi_1(M)$. By the proof of Proposition~\ref{actionrestricted}, we know that this action is proper. Indeed, if $K$ is a compact subset of $N/{\bar P}^0$, the set $E := \{ \bar\gamma \in \pi_1(M)/\Gamma_0 \ \vert \ (\bar\gamma K) \cap K \neq \emptyset \}$ is equal to $\{ \bar\gamma \in \pi_1(M)/\Gamma_0 \ \vert \ (\bar\gamma K') \cap K' \neq \emptyset \}$ where $K'$ is the inverse image of $K$ by the projection $\tilde M /\Gamma_0 \to N/{\bar P}^0$, and $K'$ is compact because $(\RR^q \times {\bar P^0})/ \Gamma_0$ is compact, thus the set $E$ is finite.
\end{rem}

We finally prove a useful property, which can be used to identify the characteristic group in some special situations.

\begin{prop} \label{density}
The image of $\RR^q$ in the torus $(\RR^q \times {\bar P}^0)/\Gamma_0$ is dense.
\end{prop}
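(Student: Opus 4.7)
The plan is to transfer the question to a closed-subgroup question inside $G:=\RR^q \times {\bar P}^0$. Let $H$ be the closure of the image of $\RR^q$ in the compact connected abelian Lie group $T:=G/\Gamma_0$, and let $K\subset G$ be the preimage of $H$ under the canonical projection $G\to T$. Then $K$ is a closed subgroup of $G$ containing both $\RR^q\times\{\mathrm{id}\}$ and $\Gamma_0$, and the statement is equivalent to $K=G$.

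Because $\RR^q\times\{\mathrm{id}\}\subset K$, every $(a,b)\in K$ satisfies $(0,b)=(a,b)-(a,\mathrm{id})\in K$, so $K$ splits as $K=\RR^q\times K'$ with $K':=\{b\in{\bar P}^0\ \vert\ (0,b)\in K\}$ a closed subgroup of ${\bar P}^0$. In particular, $K'$ contains the projection of $\Gamma_0$ onto the ${\bar P}^0$-factor. By the very definition $\Gamma_0=\pi_1(M)\cap(\mathrm{Sim}(\RR^q)\times{\bar P}^0)$, this projection is exactly $P\cap{\bar P}^0$, the set of $\gamma_N$ obtained from those $\gamma\in\pi_1(M)$ whose restriction to $N$ already lies in ${\bar P}^0$.

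The heart of the argument is then the following density claim, which I expect to be the only non-formal step: $P\cap{\bar P}^0$ is dense in ${\bar P}^0$. Indeed, $P$ is dense in $\bar P$ by definition of $\bar P$, and ${\bar P}^0$, as the identity component of the Lie group $\bar P$, is open in $\bar P$. Thus for any nonempty open $U\subset{\bar P}^0$, the set $U$ is open in $\bar P$ and must meet $P$, giving a point in $P\cap{\bar P}^0$. From this density and the closedness of $K'$ in ${\bar P}^0$ one concludes $K'\supset\overline{P\cap{\bar P}^0}={\bar P}^0$, hence $K=G$ and $H=T$, as required.

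The plan involves no real calculation; the only potential pitfall is making sure that one uses openness (not merely closedness) of ${\bar P}^0$ in $\bar P$ when passing from the density of $P$ in $\bar P$ to the density of $P\cap{\bar P}^0$ in ${\bar P}^0$, which is what makes the whole reduction work.
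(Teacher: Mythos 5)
Your proof is correct and follows essentially the same route as the paper's: both arguments reduce the statement to the density of $P\cap{\bar P}^0=\Gamma_0\vert_N$ in ${\bar P}^0$ and then use the $\RR^q$-factor to fill in the first coordinate, you merely packaging this as a closed-subgroup argument rather than a neighborhood-chasing one. The only difference is that the paper asserts that density without comment, whereas you justify it via the openness of ${\bar P}^0$ in the Lie group $\bar P$ — a welcome addition, not a divergence.
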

\begin{proof}
Let $(a, p) \in \RR^q \times {\bar P}^0$ and let $U$, $V$ be neighborhoods of $a$ and $p$ respectively. Since $P \cap {\bar P}^0 = \Gamma_0 \vert_N$ is dense in ${\bar P}^0$, there exists $\gamma_0 \in \Gamma_0$ such that $\gamma_0 \in \RR^q \times V$, and then one can find $a' \in \RR^q$ such that $a' + \gamma_0 \in U \times V$. This implies that $\langle \RR^q, \Gamma_0 \rangle$ is dense in $\RR^q \times {\bar P}^0$, and thus the image of $\RR^q$ in $(\RR^q \times {\bar P}^0)/\Gamma_0$ is dense.
\end{proof}

\subsection{Finite coverings of LCP manifolds} \label{finitecovering}
In \cite[Theorem 1.10]{Kou}, it was shown using Selberg's lemma that there exists a finite covering $M'$ of $M$ such that the closures of the leaves of the foliation $\mathcal F'$ induced by $\mathcal F$ on $M'$ are flat tori. In particular, $M'$ is still an LCP manifold. We recall here the key point of the proof for the convenience of the reader, and also because we will use it later.

\begin{prop} \label{notorsion}
Let $(M, c, D)$ be an LCP structure. Then, up to a finite covering of $M$, the action of $\pi_1(M)$ on the characteristic group by conjugation has no torsion.
\end{prop}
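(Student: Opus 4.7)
The plan is to apply Selberg's lemma to the image of the conjugation homomorphism $\varphi : \pi_1(M) \to \mathrm{Aut}(G)$, where $G := \RR^q \times \bar P^0$ is the characteristic group. This homomorphism is well defined because $\bar P$ is stable under conjugation by $P \cong \pi_1(M)$ (hence so is its identity component $\bar P^0$), while any $\gamma = (\gamma_E, \gamma_N) \in \pi_1(M)$ sends a translation $(v, \mathrm{id}) \in \RR^q \times \bar P^0$ to $(\lambda_\gamma A_\gamma v, \mathrm{id})$, where $\gamma_E(x) = \lambda_\gamma A_\gamma x + b_\gamma$; thus the translation subgroup $\RR^q$ is also preserved.

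I would then identify $\mathrm{Aut}(G)$ explicitly. Writing $G \cong \RR^p \times T^m$, every Lie group automorphism of $G$ lifts to a linear automorphism of the Lie algebra $\RR^{p+m}$ that preserves the discrete subgroup $\{0\}^p \times \ZZ^m = \ker \exp$. Such linear maps necessarily send $\{0\}^p \times \RR^m$ into itself and have the block upper-triangular form
\[
\begin{pmatrix} A & B \\ 0 & D \end{pmatrix}, \qquad A \in \mathrm{GL}_p(\RR),\ D \in \mathrm{GL}_m(\ZZ),\ B \in M_{p,m}(\RR),
\]
giving $\mathrm{Aut}(G) \cong M_{p,m}(\RR) \rtimes (\mathrm{GL}_p(\RR) \times \mathrm{GL}_m(\ZZ))$. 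Let $\pi$ denote the projection $\mathrm{Aut}(G) \to \mathrm{GL}_p(\RR) \times \mathrm{GL}_m(\ZZ) \hookrightarrow \mathrm{GL}_{p+m}(\RR)$, whose kernel is the abelian vector group $M_{p,m}(\RR)$.

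Since $M$ is compact, $\pi_1(M)$ is finitely generated, so $(\pi \circ \varphi)(\pi_1(M))$ is a finitely generated subgroup of $\mathrm{GL}_{p+m}(\RR)$. Selberg's lemma produces a torsion-free subgroup $\Lambda$ of finite index in this image, and I would set $\pi_1(M') := (\pi \circ \varphi)^{-1}(\Lambda)$, which corresponds to a finite covering $M' \to M$. The characteristic group of $M'$ is still $G$: the restriction of $\pi_1(M')$ to $N$ has finite index in $P$, so its closure has finite index in $\bar P$ and the same identity component $\bar P^0$.

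To finish, I would check that $\varphi(\pi_1(M'))$ is torsion-free. If $\varphi(\gamma)^n = \mathrm{id}$ for some $\gamma \in \pi_1(M')$, then $\pi(\varphi(\gamma)) \in \Lambda$ is torsion and therefore trivial, so $\varphi(\gamma) \in \ker \pi = M_{p,m}(\RR)$, a torsion-free vector group; hence $\varphi(\gamma) = \mathrm{id}$. The only substantive input is Selberg's lemma; the key structural observation is that the obstruction to lifting its conclusion from the linear quotient $\mathrm{GL}_p(\RR) \times \mathrm{GL}_m(\ZZ)$ back to the full $\mathrm{Aut}(G)$ is the kernel of $\pi$, which is itself torsion-free, so no further work is required there.
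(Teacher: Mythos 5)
Your proof is correct and follows essentially the same route as the paper: linearize the conjugation action $\pi_1(M) \to \mathrm{Aut}(\RR^q \times {\bar P}^0)$ via its lift to the universal cover of the characteristic group, apply Selberg's lemma to the resulting finitely generated linear group, and pull back a torsion-free finite-index subgroup, using at the end that an automorphism of a connected Lie group is determined by this lift (the paper realizes the linearization as matrices preserving the lift of the full lattice $\Gamma_0$ and applies Selberg directly to that faithful representation in $\mathrm{GL}_p(\RR)$, so no separate treatment of a kernel is needed). Two cosmetic points: your detour through the quotient $\mathrm{GL}_p(\RR)\times\mathrm{GL}_m(\ZZ)$ is avoidable, since $\mathrm{Aut}(G)$ already embeds faithfully into $\mathrm{GL}_{p+m}(\RR)$ as the group of block matrices you describe; and with your ordering of coordinates the invariance of $\{0\}^p\times\RR^m$ forces the zero block to sit in the upper-right corner, i.e. the matrices are $\left(\begin{smallmatrix} A & 0 \\ C & D \end{smallmatrix}\right)$ with $C\in M_{m,p}(\RR)$ --- a transposition that does not affect the argument.
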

\begin{proof}
The action of $\pi_1(M)$ on the characteristic group by conjugation is well-defined, because ${\bar P}^0$ is a normal subgroup of ${\bar P}$, and the restriction of $\pi_1(M)$ to $\RR^q$ contains only similarities which are in particular affine maps of the form $\mathcal A := \RR^q \ni a \mapsto R a + t$ with $(R,t) \in \mathrm{GL}_q (\RR) \times \RR^q$. For any $b \in \RR^q$, if we define the translation $\tau_b : \RR^q \ni a \mapsto a + b$, one has $\mathcal A \tau_b \mathcal A^{-1} = (\RR^q \ni a \mapsto a + R b) =: \tau_{R b}$.

We then define $J : \pi_1(M) \to \mathrm{Aut}(\RR^q \times {\bar P}^0)$, $\gamma \mapsto (\gamma_0 \mapsto \gamma \gamma_0 \gamma^{-1})$. For all $\gamma \in \pi_1(M)$, $J(\gamma)$ preserves $\Gamma_0$, which is a full-lattice of $\RR^q \times {\bar P}^0$. This implies that $J(\gamma)$ descends to an automorphism of the torus $(\RR^q \times {\bar P}^0)/\Gamma_0$, and $J(\gamma)$ defines a unique linear transformation $\tilde J(\gamma)$ on the universal cover $\RR^p$ of $\RR^q \times {\bar P}^0$, which preserves the full-lattice given by the lift of $\Gamma_0$. Up to a linear transformation, one can assume that this lattice is the canonical lattice $\ZZ^p$ of $\RR^p$. It follows that $\tilde J (\gamma) \in \mathrm{GL}_p(\RR)$. The map $\tilde J : \gamma \mapsto \tilde J (\gamma)$ is then a group homomorphism, and by Selberg's lemma there exists a subgroup $G$ of $\tilde J(\pi_1(M))$ with finite index and without torsion element. Thus $\tilde J^{-1} (G)$ is a subgroup of $\pi_1 (M)$ of finite index, such that $J (\tilde J^{-1} (G))$ has no torsion element.
\end{proof}

The question of wether this finite cover $M'$ can always be taken to be $M$ was raised, since this result is true when $\mathcal F$ is of dimension $1$. Here we answer negatively by providing a counter-example.

\begin{example}
Let $\tilde M := \RR^4 \times \RR_+^*$. We consider the matrices
\begin{align*}
A := \left( \begin{matrix} 1 & 2 & 0 & 0 \\ 2 & 3 & 0 & 0 \\ 0 & 0 & 1 & 2 \\ 0 & 0 & 2& 3  \end{matrix} \right), && B := \left( \begin{matrix} 1 & 0 & 0 & 0 \\ 0 & 1 & 0 & 0 \\ 0 & 0 & -1 & 0 \\ 0 & 0 & 0 & -1  \end{matrix} \right).
\end{align*}
The matrices $A$ and $B$ commutes so they can be diagonalized in a common basis $(X_1, Y_1, X_2, Y_2)$ where $(X_1, Y_1) \in (\RR^2 \times \{0\}^2)^2$ and $(X_2, Y_2) \in (\{0\}^2 \times \RR^2)^2$. In this basis, $A$ is written as $\mathrm{Diag} (\lambda, -\lambda^{-1}, \lambda, -\lambda^{-1})$ with $\lambda = 2 + \sqrt{5} > 1$ and $B$ remains the same. We define a group $G$ of transformations of $\tilde M$ by
\begin{equation}
G := \langle \ZZ^4, T_A : (a,t) \mapsto (A a, \lambda t), T_B : (a,t) \mapsto (B a + (0, 1/2, 0,0)^T, t) \rangle,
\end{equation}
where $\ZZ^4$ is the standard lattice acting on $\RR^4$. Simple computations provide
\begin{align} \label{relationTAB}
T_A \circ T_B (a,t) = T_B \circ T_A (a, t) + ((1,1,0,0)^T, 0), && T_B^2 (a,t) = (a + (0,1,0,0)^T, t).
\end{align}
Let now $(a,t) \in \tilde M$ and $g \in G$ such that $g (a,t) = (a,t)$. According to the relations \eqref{relationTAB} one has that there exist $\delta \in \{0,1\}$, $n \in \ZZ$ and $Z \in \ZZ^4$ such that
\begin{equation} \label{ecritureg}
g = Z \circ T_B^\delta \circ T_A^n,
\end{equation}
implying
\begin{equation}
(a,t) = g(a,t) = (B^\delta A^n a + (0, \delta/2,0,0)^T + Z, \lambda^n t).
\end{equation}
From the identity $t  = \lambda^n t$ it follows that $n = 0$, and $B^\delta a + (0, \delta/2,0,0)^T + Z = a$ gives $Z = 0$ and $\delta = 0$, so we conclude that $g = \mathrm{id}$ and that $G$ acts freely. In order to prove that $G$ acts properly discontinuously, it is sufficient to notice that $\tilde M/\ZZ^4 \simeq (S^1)^4 \times \RR_+^*$ and $G/\ZZ^4$ acts properly on $(S^1)^4 \times \RR_+^*$ since the class of $T_A$ acts by multiplication by $\lambda$ on the coordinate $t$ and $T_B$ has order $2$. Consequently, $M := \tilde M/G$ is a manifold, and it is compact since $G([0,1]^4 \times [1,\lambda]) = \tilde M$.

We consider the Riemannian metric $h$ on $\tilde M$ given by
\begin{equation}
h := dx_1^2 + dx_2^2 + t^4 (dy_1^2 + dy_2^2) + dt^2
\end{equation}
where $(x_1,y_1,x_2,y_2)$ are the coordinates in the basis $(X_1, Y_1,X_2,Y_2)$ of $\RR^4$ and $t$ is the parameter of the factor $\RR_+^*$. For any $g \in G$, if we use the decomposition $g = Z \circ T_B^\delta \circ T_A^n$ given by \eqref{ecritureg}, we have $g^* h := \lambda^{2n} h$, meaning that $G$ acts on $\tilde M$ by similarities which are not all isometries. The Levi-Civita connection of $h$ thus descends to a connection $D$ on $M$ while $h$ induces a conformal class $c$ on $M$. The triple $(M,c,D)$ is an LCP structure. From the point of view of the universal cover $\tilde M$, the flat part of this LCP structure is identified to the subspace of $\RR^4$ given by $\mathrm{Span}(X_1,X_2)$, and its non-flat part is identified to the manifold $\mathrm{Span} (Y_1,Y_2) \times \RR_+^*$.

It is easily seen that the group ${\bar P}^0$ in this case consists of all the translations lying in $\mathrm{Span} (Y_1,Y_2)$, and thus for any $(y_1,y_2,t)$ in the non-flat part, one has $\tilde{\mathcal{CF}}_{(y_1,y_2,t)} = \RR^4 \times \{ t \}$. We deduce that the closed leaf $\pi_M (\tilde{\mathcal{CF}}_{(y_1,y_2,t)})$ is diffeomorphic to $\RR^4 \times \{t \}/ S$ (see \cite[Lemma 3.5]{FlaLCP} for additional details), with $S := \{ g \in G \ \vert \ \forall (x_1,x_2) \in \RR^2, \ g(x_1,x_2,y_1,y_2,t) \in \RR^4 \times \{t \} \} = \langle \ZZ^4, T_B \rangle$, where the last equality comes from the decomposition \eqref{ecritureg}. Since $S$ acts freely and properly discontinuously on $\RR^4 \times \{ t \} \simeq \RR^4$, one has $\pi_1(\pi_M (\tilde{\mathcal{CF}}_{(y_1,y_2,t)})) \simeq S$, but $S$ is not abelian, so $\pi_M (\tilde{\mathcal{CF}}_{(y_1,y_2,t)})$ is not a torus.
\end{example}

We now come back to the finite cover $M'$ of $M$ introduced before. From the proof of \cite[Theorem 1.10]{Kou} already cited above, we have the following property: denoting by ${\bar {P'}}^0$ the equivalent of the group ${\bar P}^0$ for $M'$ (since $M'$ is also an LCP manifold), one has ${\bar {P'}}^0 = {\bar P}^0$ and the action of $\pi_1(M') \subset \pi_1(M)$ on $\RR^q \times {\bar {P'}}^0$ by conjugation has no torsion. This motivates the following definition:

\begin{definition}
We say that the LCP structure $(M, c, D)$ is {\em torsion-free} if the action of $\pi_1(M)$ by conjugation on the characteristic group has no torsion.
\end{definition}

\section{Simple LCP manifolds}

In this section we give a description of simple LCP manifolds (i.e. with simply connected characteristic group). In a first part, we study the structure of such manifolds to derive the existence of necessary conditions, and we prove in a second part that any manifold satisfying these conditions can be endowed with an LCP structure.

\subsection{Analysis of the LCP structure} \label{analysis}

Let $(M, c, D)$ be an LCP manifold. We keep the notations of the preliminary section, and we assume that ${\bar P}^0 \simeq \RR^{p-q}$ for some $p \ge q$, i.e. that the LCP structure is simple and its characteristic group is isomorphic to $\RR^p$. By Corollary~\ref{PSimCon} this group acts freely on $\tilde M$. Moreover, we obtain the following result:

\begin{prop} \label{leafiso}
If $(M,c,D)$ is simple, then for any $x \in N$, the Riemannian manifold $\tilde {\mathcal{CF}}_x = \RR^q \times {\bar P}^0 x$ is isometric to the Euclidean space $\RR^p$.
\end{prop}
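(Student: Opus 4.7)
The plan is to reduce the isometry type of $\tilde{\mathcal{CF}}_x$ to a statement about left-invariant metrics on $\RR^{p-q}$, which is very rigid, and then use the product structure to recover $\RR^p$.

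First I would analyze the non-Euclidean factor ${\bar P}^0 x \subset N$. Under the simplicity hypothesis ${\bar P}^0 \simeq \RR^{p-q}$, Corollary~\ref{PSimCon} gives that ${\bar P}^0$ acts freely on $N$. Moreover, ${\bar P}^0$ is a closed subgroup of $\mathrm{Isom}(N, g_N)$: indeed ${\bar P}^0 \subset \mathrm{Isom}(N, g_N)$ by \cite[Lemma 4.1]{Kou}, the group $\mathrm{Isom}(N,g_N)$ is closed in $\mathrm{Sim}(N,g_N)$ with respect to the compact-open topology, and ${\bar P}^0$ is closed in $\bar P$ as a connected component. Since $\mathrm{Isom}(N, g_N)$ acts properly on $N$, so does ${\bar P}^0$. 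Combined with freeness, this shows that the orbit map $\phi_x : {\bar P}^0 \to {\bar P}^0 x$, $g \mapsto g \cdot x$ is a diffeomorphism onto an embedded submanifold.

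Next I would transfer the induced metric on the orbit to the group. Equip the orbit ${\bar P}^0 x$ with the Riemannian metric $g_x$ obtained by restriction of $g_N$, and set $\tilde g := \phi_x^* g_x$ on ${\bar P}^0$. For any $h \in {\bar P}^0$ the orbit map satisfies $\phi_x \circ L_h = h \cdot \phi_x$, where $L_h$ denotes left translation in the group and the right-hand side denotes the isometric action of $h$ on $N$ restricted to ${\bar P}^0 x$. Pulling back, $L_h^* \tilde g = \tilde g$, so $\tilde g$ is a left-invariant metric on the abelian Lie group ${\bar P}^0 \simeq \RR^{p-q}$.

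Any left-invariant metric on $\RR^{p-q}$, viewed as the additive Lie group, is simply a translation-invariant Riemannian metric with constant coefficients, hence flat and complete; after a linear change of basis it coincides with the standard Euclidean metric. Thus $({\bar P}^0, \tilde g)$, and therefore $({\bar P}^0 x, g_x)$, is isometric to the Euclidean space $\RR^{p-q}$. Finally, since the metric $h_D$ on $\tilde M$ restricts to the product of the flat metric on $\RR^q$ and $g_N$ on $N$, the leaf $\tilde{\mathcal{CF}}_x = \RR^q \times {\bar P}^0 x$ with its induced metric is a Riemannian product of $\RR^q$ and $\RR^{p-q}$, hence isometric to $\RR^p$.

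The only mildly delicate point is the properness/closedness argument that turns ${\bar P}^0 x$ into an embedded Euclidean submanifold rather than merely an immersed one; everything else is formal once the free and proper action is established.
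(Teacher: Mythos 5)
Your proof is correct and follows essentially the same route as the paper: Corollary~\ref{PSimCon} gives freeness of the ${\bar P}^0$-action, and then the invariance of the induced metric under the simply transitive abelian isometry group forces the leaf to be flat Euclidean space. The paper phrases this in one step for the whole group $\RR^q \times {\bar P}^0 \simeq \RR^p$ acting on $\tilde{\mathcal{CF}}_x$, whereas you treat the orbit ${\bar P}^0 x$ first and then take the Riemannian product; the extra care about properness and embeddedness is fine but not needed for the isometry type.
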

\begin{proof}
Let $x \in N$. By Corollary~\ref{PSimCon}, the group ${\bar P}^0$ acts freely on $N$, so $\tilde {\mathcal{CF}}_x = \RR^q \times {\bar P}^0 x \simeq \RR^q \times {\bar P}^0 \simeq \RR^p$. Moreover, we recall that ${\bar P}^0$ is a subgroup of the isometries of $(N,g_N)$, hence the metric $h_D$ restricted to $\tilde {\mathcal F}_x$ is invariant by the group $\RR^q \times {\bar P}^0 \simeq \RR^p$, which implies that $\tilde {\mathcal{CF}}_x$ is an Euclidean space isometric to $\RR^p$.
\end{proof}

Corollary~\ref{PSimCon} gives that the group ${\bar P}^0$ acts freely on $N$. It has been shown in \cite[Lemma 4.9]{Kou} that $\bar P$ acts properly on $N$, and so does ${\bar P}^0$ as a closed subgroup of $\bar P$. Consequently, by \cite[Theorem 21.10]{Lee} the quotient $C := N/{\bar P}^0$ is a smooth manifold. The canonical submersion $\pi_N : N \to C$ is a Riemannian submersion because ${\bar P}^0$ acts by isometries on $(\tilde M, h_D)$, and it defines a $\RR^{p-q}$-principal bundle. Here, we do not need to specify whether the action of $\RR^{p-q}$ is on the right or the left because the group $\RR^{p-q}$ is abelian. In particular, the fibers of the principal bundle $\pi : \tilde M \to C$ are contractible and by \cite[Corollary 29.3]{Stee} this bundle admits a section, so it is trivial because it is a principal bundle. Thus we can write $N \simeq \RR^{p-q} \times C$.

The metric $g_N$ is invariant under the action of the group ${\bar P}^0 \simeq \RR^{p-q}$ (which consists of isometries of $g_N$), thus it is a $\RR^{p-q}$-invariant metric of the principal bundle $\RR^{p-q} \times C \to C$.

We now study the action of $\pi_1(M)$.  One has $\tilde M \simeq \RR^q \times N \simeq \RR^p \times C$ by the previous analysis, and since $\Gamma_0$ is a full-lattice of $\RR^q \times {\bar P}^0 \simeq \RR^p$ it induces a full-lattice on each fiber of the $\RR^p$-principal bundle $\tilde M \to C = N/{\bar P}^0$. Let us fix a basis of $\Gamma_0$. Using this basis, we can now identify $\RR^p$ with $\RR^q \times {\bar P}^0$ in such a way that $\Gamma_0$ is the canonical lattice $\ZZ^p$ of $\RR^p$. Under this identification, $\RR^q$ and ${\bar P}^0$ are vector subspaces $E^q$ and $E^{p-q}$ of $\RR^p$ satisfying $\RR^p = E^q \oplus E^{p-q}$. We will from now on identify $\Gamma_0$ with the canonical full-lattice $\ZZ^p$ of $\RR^p$ and $\tilde M$ with $\RR^p \times C$.

For any $a \in \RR^p$, we denote by $\tau_a$ the action of $a$ on $\tilde M$. We recall that $\gamma$ acts by conjugation on $\RR^p$. Indeed, the action of $\gamma_N$ by conjugation on $E^{p-q} \simeq {\bar P}^0$ is well-defined, because ${\bar P}^0$ is a normal subgroup of $\bar P$, so it is stable by the action of $P$ by conjugation. This transformation is an automorphism of the group $E^{p-q}$, and in particular it is a linear invertible map of $E^{p-q}$ viewed as a vector space. Moreover, $\gamma_E$ acts on $E^q$ as an affine transformation $E^{p-q} \ni v \mapsto R_\gamma v + t_\gamma$, where $(R_\gamma, t_\gamma) \in \mathrm{GL} (E^q) \times E^q$. Consequently, if for any $u \in E^q$ we denote by $t(u)$ the translation by $u$ in $E^q$, one has $\gamma_E t(v) \gamma_E^{-1} = t(R_\gamma v)$. Altogether, there exists a matrix $A_\gamma \in \mathrm{GL}_p(\RR)$ such that for any $a \in \RR^p$ one has $\gamma \tau_a \gamma^{-1} = \tau_{A_\gamma a}$, and $A_\gamma$ preserves the decomposition $\RR^p = E^q \oplus E^{p-q}$.

The matrix $A_\gamma$ stabilizes $\Gamma_0 \subset \RR^p$ because it is a normal subgroup of $\pi_1(M)$, and so does $(A_\gamma)^{-1} = A_{\gamma^{-1}}$, thus it is an element of $\mathrm{GL}_p(\ZZ)$. In addition, the transformation $\gamma$ on $\tilde M$ descends to a transformation $\bar \gamma$ on $N/{\bar P}^0$, because for any $(a,y) \in \RR^p \times N$ one has $\gamma \tau_a (0,y) = (\gamma \tau_a \gamma^{-1}) \gamma (0,y)$, thus $\gamma \tau_a (0,y)$ and $\gamma (0,y)$ are in the same coset modulo ${\bar P}^0$ since $\gamma \tau_a \gamma^{-1} = \tau_{A_\gamma a} \in \RR^p$. We deduce that for any $(a, x) \in \RR^p \times C$, one has
\begin{equation} \label{decompogamma}
\gamma (a,x) = \gamma \tau_a \gamma^{-1} \gamma (0,x) = \tau_{A_\gamma a} \gamma (0,x) = (A_\gamma a + f_\gamma (x), \bar \gamma (x)),
\end{equation}
where $f_\gamma : C \to \RR^p$ is a function whose projection on $\RR^q$ is constant, because $\gamma$ preserves the product structure $\tilde M \simeq \RR^q \times N$. Consequently, $\gamma$ is an automorphism of the trivial $\RR^p$-principal bundle $\RR^p \times C \to C$ (see \cite[Section 5]{KN1} for the definition). Moreover, since $A \in \mathrm{GL}_p(\ZZ)$, the map $\gamma$ descends to an automorphism of the trivial $(S^1)^p$-principal bundle $(\RR^p / \ZZ^p) \times C \simeq (S^1)^p \times C \to C$. The only elements of $\pi_1(M)$ which descend to the identity this way are the elements of $\ZZ^p \simeq \Gamma_0 \subset \pi_1(M)$, thus $\pi_1(M)/ \Gamma_0$ is a subgroup of the automorphisms of $(S^1)^p \times C \to C$.

We introduce the following definitions:

\begin{definition} \label{linpart}
For any principal bundle $P \to B$, we denote by $\mathrm{Aut}(P \to B)$ the set of  its automorphisms.

An automorphism of the trivial $\RR^p$-principal over $C$ can be written as a map $\RR^p \times C \ni (a,x) \mapsto (A a + f(x), \varphi(x))$ where $A \in \mathrm{GL}_p (\RR)$, $f \in C^\infty(C, \RR^p)$ and $\varphi \in \mathrm{Diff} (C)$. We call $A$ the {\em linear part} of the automorphism and $f$ its {\em translation part}. We define $\mathrm{Aut}_{E^q}^\ZZ (\RR^p \times C \to C)$ as the set of automorphisms of $\RR^p \times C \to C$ such that $A \in \mathrm{GL}_p (\ZZ)$ and $f$ composed with the projection onto $E^q$ parallel to $E^{p-q}$ is constant.

We identify the automorphisms of $(S^1)^p$ with the matrices of $\mathrm{GL}_p(\ZZ)$. An automorphism of the trivial $(S^1)^p$-principal bundle over $C$ can be written as $(S^1)^p \times C \ni (\bar a,x) \mapsto (A \bar a + \bar f (x), \varphi(x))$ where $A \in \mathrm{GL}_p (\ZZ)$, $\bar f \in C^\infty(C, (S^1)^p)$ and $\varphi \in \mathrm{Diff} (C)$. Since the manifold $C$ is simply-connected, any element of $\mathrm{Aut} ((S^1)^p \times C \to C)$ can be lifted to an element of $\mathrm{Aut} (\RR^p \times C \to C)$, uniquely defined up to a choice of base-points (we will omit this choice since it does not impact our results). We define $\mathrm{Aut}_{E^q} ((S^1)^p \times C \to C)$ as the set of automorphisms of the trivial $(S^1)^p$-principal bundle over $C$ whose lifts lie in $\mathrm{Aut}_{E^q}^\ZZ (\RR^p \times C \to C)$.
\end{definition}

With these definitions, we deduce from the previous discussion that $\pi_1(M)$ is a subgroup of $\mathrm{Aut}^\ZZ_{E^q} (\RR^p \times C \to C)$ and $\pi_1(M)/ \Gamma_0$ is a subgroup of $\mathrm{Aut}_{E^q} ((S^1)^p \times C \to C)$. This implies that, if we consider the projection
\[
\mathcal P : \mathrm{Aut}^\ZZ_{E^q} (\RR^p \times C \to C) \to \mathrm{Aut}_{E^q} ((S^1)^p \times C \to C),
\]
one has $\pi_1(M) = \mathcal P^{-1} (\pi_1(M)/ \Gamma_0)$ because $\ZZ^p \simeq \Gamma_0 \subset \pi_1(M)$.

We know from Remark~\ref{remark1} that $\pi_1(M) / \Gamma_0$ is a discrete group acting properly on $C$, so $N/ \bar P \simeq C/(\pi_1(M) / \Gamma_0)$ has an orbifold structure. Since $C$ is a simply connected manifold, this orbifold is {\em good}, and it is compact because, $M$ being compact, the projection onto $C$ of any compact fundamental domain for the co-compact action of $\pi_1(M)$ on $\tilde M$ is a compact fundamental domain for the action of $\pi_1(M) / \Gamma_0$ on $C$.

We assume from now on that $\pi_1(M)$ acts on ${\bar P}^0$ without torsion element, which is always possible up to a finite covering as explained in Section~\ref{finitecovering}. If we pick $\gamma \in \pi_1(M)$ such that $\bar \gamma \in \pi_1(M) / \Gamma_0$ acts as the identity on $C$, we know thanks to Remark~\ref{remark1} that $\gamma$ has finite order, thus $A_\gamma = \mathrm{I}_p$ because $A_\gamma$ cannot be a torsion element, and the translation part $f_\gamma$ has to be constant because it exists $k \ge 1$ such that $k f_\gamma \in \Gamma_0 \simeq \ZZ^p$. Hence $f_\gamma$ is an element of ${\bar P}^0$ and $\gamma \in \Gamma_0$ by definition. We conclude that the only elements of $\pi_1(M)$ whose induced action on $C$ is the identity are the elements of $\Gamma_0$. Consequently, the group $\pi_1(M)/\Gamma_0$ acts effectively on $C$, and it can be identified to the fundamental group of the good orbifold $N/\bar P$. Thus there is a short exact sequence
\begin{equation}
0 \to \ZZ^p \to \pi_1(M) \to \pi_1(N/\bar P) \to 0.
\end{equation}

Altogether, we have proved the following theorem:

\begin{theorem} \label{admissibledata}
Let $(M,c,D)$ be a simple LCP manifold. Then, there exists a simply connected manifold $C$ and an integer $p \ge 2$ such that $\tilde M \simeq \RR^p \times C$. There is a discrete subgroup $\Omega$ of $\mathrm{Aut}_{E^q} ((S^1)^p \times C \to C)$ such that, if we consider the canonical projection
\[
\mathcal P : \mathrm{Aut}^\ZZ_{E^q} (\RR^p \times C \to C) \to \mathrm{Aut}_{E^q} ((S^1)^p \times C \to C),
\]
one has $\pi_1(M) = \mathcal P^{-1}(\Omega)$, and the action of $\Omega$ restricted to $C$ is proper and co-compact. Moreover, there is a decomposition $\RR^p = E^q \oplus E^{p-q}$, such that $\dim E^q = q$, $E^q$ is the flat part of the LCP manifold, and the linear part (see Definition~\ref{linpart}) of $\Omega$ preserves this decomposition.

If in addition the LCP manifold is torsion-free, then the group $\Omega$ acts effectively on $C$, so it can be identified to the fundamental group of the compact good orbifold $C/\Omega$, and there is a short exact sequence $0 \to \ZZ^p \to \pi_1(M) \to \Omega \to 0$.
\end{theorem}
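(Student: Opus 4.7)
The plan is to first produce a trivializing decomposition $\tilde M \simeq \RR^p \times C$ compatible with the characteristic group, and then decode the action of $\pi_1(M)$ on the resulting product to extract the subgroup $\Omega$.

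First I would set $C := N/{\bar P}^0$. Since ${\bar P}^0$ acts freely on $N$ by Corollary~\ref{PSimCon} and properly on $N$ as a closed subgroup of $\bar P$ (which acts properly by \cite[Lemma 4.9]{Kou}), the quotient $C$ is a smooth manifold and $\pi_N : N \to C$ is an $\RR^{p-q}$-principal bundle. Contractibility of the fiber together with \cite[Corollary~29.3]{Stee} makes this bundle trivial, so $N \simeq \RR^{p-q} \times C$. The long exact sequence of the fibration yields simple connectedness of $C$, since $\tilde M$ and hence its factor $N$ are simply connected. Combined with $\tilde M \simeq \RR^q \times N$ from Theorem~\ref{LCPTh}, this gives $\tilde M \simeq \RR^p \times C$. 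Fixing a basis of $\Gamma_0$ identifies $\Gamma_0$ with $\ZZ^p \subset \RR^p$ and singles out the decomposition $\RR^p = E^q \oplus E^{p-q}$ with $E^q$ corresponding to the flat factor and $E^{p-q}$ to the Lie algebra of ${\bar P}^0$.

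Next I would analyze the action of $\pi_1(M)$ on $\tilde M \simeq \RR^p \times C$. For each $\gamma \in \pi_1(M)$, conjugation by $\gamma$ preserves the characteristic group (both $\RR^q$ and ${\bar P}^0$ are normalized by $\pi_1(M)$) and the full-lattice $\Gamma_0$, so it defines a matrix $A_\gamma \in \mathrm{GL}_p(\ZZ)$ preserving $E^q$. The identity $\gamma \tau_a \gamma^{-1} = \tau_{A_\gamma a}$ then forces $\gamma$ to descend through $\pi_N$ to some $\bar\gamma \in \mathrm{Diff}(C)$ and to have the form $\gamma(a,x) = (A_\gamma a + f_\gamma(x), \bar\gamma(x))$ for a function $f_\gamma : C \to \RR^p$ whose $E^q$-component is constant, because $\gamma$ preserves the product $\RR^q \times N$. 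Hence $\gamma$ lies in $\mathrm{Aut}^\ZZ_{E^q}(\RR^p \times C \to C)$. Since $\Gamma_0 \simeq \ZZ^p$ is exactly $\ker \mathcal P$, setting $\Omega := \pi_1(M)/\Gamma_0$ yields $\pi_1(M) = \mathcal P^{-1}(\Omega)$; properness of the $\Omega$-action on $C$ follows from Remark~\ref{remark1}, and co-compactness from projecting a compact fundamental domain for $\pi_1(M) \curvearrowright \tilde M$ down to $C$.

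For the torsion-free case, I would pick $\gamma \in \pi_1(M)$ whose image in $\Omega$ fixes $C$ pointwise. Remark~\ref{remark1} together with properness forces $\gamma$ to have finite order modulo $\Gamma_0$, so some power $\gamma^k$ lies in $\Gamma_0$; the torsion-free hypothesis then forces $A_\gamma = \mathrm{I}_p$, and $k f_\gamma \in \ZZ^p$ forces $f_\gamma$ to be constant, placing $\gamma$ in $\Gamma_0$. This establishes effectiveness of the $\Omega$-action on $C$, identifies $\Omega$ with the orbifold fundamental group of the good compact orbifold $C/\Omega$, and closes the short exact sequence $0 \to \ZZ^p \to \pi_1(M) \to \Omega \to 0$. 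I expect the main delicacy to be the bookkeeping in the second paragraph, which separates intrinsic data from choices (the basis of $\Gamma_0$ and the base-points when lifting $(S^1)^p$-bundle automorphisms to $\RR^p$-bundle automorphisms), together with the verification that $f_\gamma$ is smooth and retains its constant $E^q$-projection after descending modulo $\ZZ^p$.
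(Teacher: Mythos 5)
Your proposal is correct and follows essentially the same route as the paper: quotienting $N$ by the freely and properly acting ${\bar P}^0$ to get the trivial principal bundle $N \simeq \RR^{p-q}\times C$, reading off $A_\gamma\in\mathrm{GL}_p(\ZZ)$ and the translation part $f_\gamma$ from conjugation on the characteristic group and on $\Gamma_0\simeq\ZZ^p$, and handling the torsion-free case by forcing $A_\gamma=\mathrm{I}_p$ and $f_\gamma$ constant for any $\gamma$ acting trivially on $C$. The only (harmless) addition is your explicit use of the long exact sequence of the fibration to get simple connectedness of $C$, which the paper leaves implicit.
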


\begin{rem} \label{considernotorsion}
From the discussion of Section~\ref{finitecovering}, for any LCP structure $(M,c,D)$, up to considering a finite cover of $M$ one can assume that $\pi_1(M)$ acts by conjugation on $\RR^q \times {\bar P}^0 \simeq \RR^p$ without torsion. This means exactly that the linear part of $\Omega$ is a group with no torsion element and that the LCP structure is torsion-free. Then, up to a finite covering, $\Omega$ is isomorphic to $\pi_1(C/\Omega)$.
\end{rem}

\subsection{Construction of an LCP structure from admisible data} We now investigate the converse of the above statement, i.e. we consider a simply connected manifold $C$, an integer $p \ge 2$ and a discrete subgroup $\Omega$ of the automorphisms of $(S^1)^p \times C \to C$ satisfying the hypotheses of Theorem~\ref{admissibledata} and we will construct an LCP structure on $M := (\RR^p \times C) / \Omega$. More precisely, we assume that there exists a decomposition $\RR^p = E^q \oplus E^{p-q}$ with $E^q$ a $q$-dimensional subspace of $\RR^p$, $\Omega$ is a subgroup of $\mathrm{Aut}_{E^q} ((S^1)^p \times C \to C)$, the linear part of $\Omega$ preserves the decomposition $E^q \oplus E^{p-q}$ and its restriction to $E^q$ contains only similarities with respect to a given scalar product $g_{E^q}$, not all being isometries. We also assume that $\Omega$ restricted to $C$ acts properly and co-complactly (but not necessarily effectively), and for any element $\omega \in \Omega \setminus \{\mathrm{id}\}$ with a fix point $x \in C$, $\omega$ acts on $\RR^p \times \{ x \}$ (which is preserved by the transformation $\omega$) with no fixed point. This last condition is implicit in the conclusion of Theorem~\ref{admissibledata} since $\pi_1(M)$ acts freely on $\tilde M$.

\begin{rem}
In view of Remark~\ref{considernotorsion}, we could assume that the linear part of $\Omega$ is a group with no torsion element. In this case, for any $x \in C$ and $\omega \in \Omega \setminus \{\mathrm{id}\}$ in the isotropy group of $x$, this subgroup of $\Omega$ being finite because the action is proper, $\omega$ has finite order. Consequently, the linear part of $\omega$ is the identity, and the last condition listed above just means that the translation part does not vanish at $x$.
\end{rem}

The first step is to find a candidate for the fundamental group of the LCP manifold. For this, we introduce a group $G$ of transformations of $\tilde M := \RR^p \times C$. If
\[
\mathcal P : \mathrm{Aut}^\ZZ_{E^q} (\RR^p \times C \to C) \to \mathrm{Aut}_{E^q} ((S^1)^p \times C \to C)
\]
denotes the canonical projection, we define the subgroup $G$ of $\mathrm{Aut}^\ZZ_{E^q} (\RR^p \times C \to C)$ by
\begin{equation} \label{Gdefinition}
G := \mathcal P^{-1}(\Omega),
\end{equation}
which clearly contains $\ZZ^p$.

\begin{lemma} \label{actionG}
The group $G$ acts freely, properly discontinuously and co-compactly on $\RR^p \times C$.
\end{lemma}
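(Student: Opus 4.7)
The plan is to exploit the short exact sequence $1 \to \ZZ^p \to G \xrightarrow{\mathcal{P}} \Omega \to 1$ built into the definition $G = \mathcal{P}^{-1}(\Omega)$, and to reduce each of the three properties to a corresponding property for the $\ZZ^p$-action on $\RR^p$ by translations combined with the assumed $\Omega$-action on $C$. For \emph{freeness}, I would suppose $g(a,x) = (a,x)$ for some $g \in G$ and $(a,x) \in \RR^p \times C$ and set $\omega := \mathcal{P}(g) \in \Omega$, so $\omega(x) = x$. If $\omega = \mathrm{id}$ then $g \in \ker \mathcal{P} = \ZZ^p$ is a translation with a fixed point, hence $g = \mathrm{id}$. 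If $\omega \neq \mathrm{id}$, the standing hypothesis that $\omega$ acts on $\RR^p \times \{x\}$ without fixed point (which, modulo the $\ZZ^p$-ambiguity in choosing a lift, is equivalent to $\omega$ having no fixed point on the torus fiber $(S^1)^p$ over $x$) is precisely what rules out this case.

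For \emph{proper discontinuity}, I would take a compact $K \subset \RR^p \times C$ with compact projections $K_1 \subset \RR^p$ and $K_2 \subset C$, and show $\{g \in G : g(K) \cap K \neq \emptyset\}$ is finite in two steps. First, any such $g$ projects under $\mathcal{P}$ into $\{\omega \in \Omega : \omega(K_2) \cap K_2 \neq \emptyset\}$, which is finite by the proper, discrete action of $\Omega$ on $C$. Second, for each such $\omega$ I would fix one lift $g_\omega \in G$ and write an arbitrary element of $\mathcal{P}^{-1}(\omega)$ as $\tau_n \circ g_\omega$ with $n \in \ZZ^p$. The explicit formula $(\tau_n \circ g_\omega)(a, x) = (A_{g_\omega} a + f_{g_\omega}(x) + n,\, \omega(x))$ forces $n = a' - A_{g_\omega} a - f_{g_\omega}(x)$ for some $a, a' \in K_1$ and $x \in K_2$, so continuity of $f_{g_\omega}$ together with compactness of $K_1, K_2$ confines $n$ to a bounded, hence finite, subset of $\ZZ^p$.

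For \emph{co-compactness}, I would pick a compact $K_C \subset C$ with $\Omega \cdot K_C = C$ and set $K := [0,1]^p \times K_C$. Given $(a,x) \in \RR^p \times C$, choose $y \in K_C$ and $\omega \in \Omega$ with $\omega(y) = x$, fix a lift $g_\omega \in G$, and seek $b \in [0,1]^p$ and $n \in \ZZ^p$ with $(\tau_n \circ g_\omega)(b, y) = (a,x)$. This reduces to solving $A_{g_\omega} b + n = a - f_{g_\omega}(y)$, which is always possible because $A_{g_\omega} \in \mathrm{GL}_p(\ZZ)$ preserves $\ZZ^p$ and therefore descends to an automorphism of the torus $\RR^p / \ZZ^p$, so $A_{g_\omega}([0,1]^p)$ still surjects onto $\RR^p / \ZZ^p$. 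The one delicate step is freeness, where one must correctly interpret the "no fixed point on $\RR^p \times \{x\}$" hypothesis as being insensitive to the choice of lift; the other two steps are then routine bookkeeping relying on the integrality of the linear parts $A_g$.
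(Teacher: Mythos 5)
Your proof is correct and follows essentially the same strategy as the paper: both exploit the exact sequence $1 \to \ZZ^p \to G \to \Omega \to 1$, derive freeness from the no-fixed-point hypothesis on the fibers (with the same, correct interpretation that the hypothesis is insensitive to the choice of lift), properness from properness of the $\Omega$-action on $C$ together with control in the compact fiber direction, and co-compactness from the product domain $[0,1]^p \times K_C$. The only cosmetic difference is that the paper reduces to the quotient action on $(S^1)^p \times C$ by citing a general proposition from \cite{FlaLCP}, whereas you carry out the $\ZZ^p$-bookkeeping explicitly on $\RR^p \times C$.
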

\begin{proof}
According to \cite[Proposition 4.1]{FlaLCP}, since $\ZZ^p \trianglelefteq G$ by construction, $G$ acts freely and properly discontinuously on $\RR^p \times C$ if and only if $\ZZ^p$ acts freely and properly discontinuously on $\RR^p \times C$ and $G / \ZZ^p$ acts freely and properly discontinuously on $(\RR^p \times C)/ \ZZ^p \simeq (S^1)^p \times C$. The first claim is obvious, so we are left to check the second one.

Let $\omega \in \Omega$ which has a fix point $(\bar a, x) \in T^p \times C$. Then, the restriction of $\omega$ to $C$ satisfies $\omega (x) = x$, so $\omega$ is in the isotropy group of $x$, and by assumption $\omega\vert_{\RR^p \times \{ x \}}$ would have no fixed point if $\omega \neq \mathrm{id}$, thus $\omega = \mathrm{id}$ since $\omega$ has a fix point.

The projection map $(S^1)^p \times C \to C$ is proper, so the action of $G / \ZZ^p$ on $T^p \times C \to C$ is proper since $\Omega$ acts properly on $C$.

The co-compactness of the action is easily checked by choosing a compact elementary domain $D$ of $C$ for the action of $\Omega$ and considering $D' := [0,1]^p \times D$. Then one has $G (D') = \RR^p \times C$.
\end{proof}

From Lemma~\ref{actionG}, we know that $M := (\RR^p \times C) / G$ is a compact manifold. In order to define an LCP structure on $M$, it remains to construct a Riemannian metric $h$ on its universal cover $\tilde M :=  \RR^p \times C$, which is $G$-equivariant (in the sense that $G$ acts by similarities on $(\tilde M, h)$) and with reducible holonomy. This second point is the easiest: using the hypothesis made at the beginning of this section concerning the restriction of $\Omega$ to $E^q$ and writing $\tilde M = E^q \times (E^{p-q} \times C)$, the metric should be of the form $g_{E^q} + g_N$ where $g_{E^q}$ was introduced before, and $g_N$ is a metric on $N := E^{p-q} \times C$. The action of $G$ preserves the product structure, and its restriction to $E^q$ consists only on similarities, not all being isometries. Hence we can define the group homomorphism $\tilde\rho : G \to \RR^*_+$ which gives the similarity ratio of any element of this restriction.

We will now describe all possible Riemannian metrics for $g_N$ such that the group $E^{p-q}$ acts by isometries. For such a metric, the group $\ZZ^p$ acts by isometries on $\tilde M$ and $\tilde\rho$ descends to a group homomorphism $\rho : \Omega \to \RR^*_+$. These metrics are given in a basis adapted to the decomposition $E^{p-q} \times C$ by fields of matrices over $C$ of the form
\begin{align} \label{form2}
\left(
\begin{matrix}
Q & b_{FB} \\
b_{BF} & g_B
\end{matrix}
\right)
\end{align}
with $g_B$ being a Riemannian metric on $C$, $Q$ being a field of positive definite quadratic forms on $E^{p-q}$, and $b_{FB} : E^{p-q} \times T C \to \RR$ and $b_{BF} : T C \times E^{p-q} \to \RR$ are two bilinear forms related by the symmetry of the metric, i.e. $b_{FB}$ is determined by $b_{BF}$.

Let $\omega \in \Omega$ and we denote by $A \in \mathrm{GL}_p(\ZZ)$ its linear part and by $f \in C^\infty (C, E^{p-q})$ the $E^{p-q}$-component of its translation part. The representatives of $\omega$ in $G$ (i.e. the elements of $\mathcal P^{-1} (\{\omega\})$) all have the same differential, since they differ only by a translation element of $\ZZ^p$, thus the group $\Omega$ acts on $TN$ by push-forward. The restriction of $\omega_*$ to $E^{p-q}$ is a constant matrix corresponding to the linear part $A$ of $\omega$ restricted to $E^{p-q}$ and for any $X \in TC$, one has $\omega_* X = d \omega(X) + X(f)$. In particular, the transformation $\omega_*$ is $E^{p-q}$-invariant because $f$ is, and so is the action of $\Omega$ by push-forward.

The admssible metrics on $\tilde M$ should be $G$-equivariant, which is equivalent to the metric $g_N$ under the form \eqref{form2} being $\Omega$-equivariant, i.e. the admissible metrics correspond to the positive definite matrices satisfying:
\begin{align} \label{equivariance}
\omega^* \left(
\begin{matrix}
Q & b_{FB} \\
b_{BF} & g_B
\end{matrix}
\right) = \rho(\omega)^2 \left(
\begin{matrix}
Q & b_{FB} \\
b_{BF} & g_B
\end{matrix}
\right), && \forall \omega \in \Omega,
\end{align}
where the pull-back is well-defined by the previous discussion on the action of $\Omega$ by push-forward. We thus need to construct such an invariant metric. Using the fact that $\Omega$ acts co-compactly on $C$, there exists a compact $K \subset C$ such that $C = \Omega \cdot K$. By compactness, there is a finite cover $(U_i)_{i \in I}$ of $K$ by open sets whose closures are contained in charts of $C$ and are closed ball of the Euclidean space in these charts, so in particular each $U_i$ is relatively compact.  Defining $U = \cup_{i \in I} U_i$, it is easily seen that $U$ is a relatively compact open set such that $K \subset U \subset \bar U$. On each $U_i$ one can construct a Riemannian metric $g_i$ of the form \eqref{form2} (by taking $b_{BF}$ and $b_{FB}$ to be zero for example), and one can find a function $\chi_i : \tilde M \to \RR$ with support lying in $\bar U_i$ such that $\chi_i > 0$ on $U_i$. The metric $g := \sum\limits_{i \in I} \chi_i g_i$ is then of the form \eqref{form2} and is a Riemannian metric on $U$. We now define $g_N$ by the formula:
\begin{equation} \label{defgN}
g_N := \sum\limits_{\omega \in \Omega} \rho(\omega)^{-2} \omega^*  g.
\end{equation}

\begin{lemma} \label{lemmegN}
The metric $g_N$ given by Equation~\eqref{defgN} is well-defined and is a $\Omega$-invariant Riemannian metric on $E^{p-q} \times C$. Moreover, any $\Omega$-invariant metric arises from this construction.
\end{lemma}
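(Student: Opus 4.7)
The plan is to verify four claims in turn: local finiteness of the sum defining $g_N$, positive-definiteness together with membership in the class \eqref{form2}, the equivariance relation \eqref{equivariance}, and finally the converse claim that every invariant metric arises from this construction.

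First I would address well-definedness. By construction, $g$ extends by zero outside $E^{p-q}\times\bar U$, so its support projects into $\bar U \subset C$. For a relatively compact neighbourhood $V \subset C$, only those $\omega \in \Omega$ with $\bar\omega(V) \cap \bar U \neq \emptyset$ contribute to $g_N$ over $E^{p-q} \times V$, and the properness of the $\Omega$-action on $C$ makes this set finite; hence the sum is locally finite and $g_N$ is smooth. Since $C = \Omega \cdot K$ with $K \subset U$, for every $x \in C$ there is some $\omega_0$ with $\omega_0(x) \in U$, so $\rho(\omega_0)^{-2}\,\omega_0^* g$ is positive definite at every point of $E^{p-q}\times\{x\}$ while the remaining summands are positive semi-definite, whence $g_N$ is a Riemannian metric. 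Because each $\omega$ is a bundle automorphism of $\RR^p\times C\to C$ whose linear part stabilises $E^{p-q}$, one has $\omega\circ T_u = T_{A_\omega u}\circ\omega$ for every translation $T_u$ by $u\in E^{p-q}$, so $\omega^*g$ remains $E^{p-q}$-invariant---equivalently, of the form \eqref{form2}---and the same holds for $g_N$.

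Next, the equivariance \eqref{equivariance} reduces to a change of summation index: for $\omega'\in\Omega$,
\begin{equation*}
(\omega')^* g_N = \sum_{\omega\in\Omega}\rho(\omega)^{-2}\,(\omega\omega')^*g = \sum_{\omega''\in\Omega}\rho(\omega''(\omega')^{-1})^{-2}\,(\omega'')^*g = \rho(\omega')^2\, g_N,
\end{equation*}
using that $\rho$ is a group homomorphism so $\rho(\omega''(\omega')^{-1})^{-2}=\rho(\omega')^2\,\rho(\omega'')^{-2}$.

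For the converse, given an $\Omega$-invariant Riemannian metric $g_N'$ of the form \eqref{form2}, the idea is to build an equivariant partition of unity on $C$ and read off $g$ from it. I would pick a non-negative $\psi\in C^\infty_c(C)$ with $\psi>0$ on $K$ and $\supp\psi\subset U$; then $\Psi:=\sum_{\omega\in\Omega}\omega^*\psi$ is locally finite (again by properness), strictly positive (because $C=\Omega\cdot K$ and $\psi|_K>0$), and manifestly $\Omega$-invariant, so $\chi:=\psi/\Psi$ is smooth with $\supp\chi\subset\bar U$ and $\sum_{\omega\in\Omega}\omega^*\chi \equiv 1$. Setting $g:=\chi\,g_N'$ then produces a smooth symmetric $2$-tensor of the form \eqref{form2} supported in $E^{p-q}\times\bar U$, and
\begin{equation*}
\sum_{\omega\in\Omega}\rho(\omega)^{-2}\,\omega^*g = \sum_{\omega\in\Omega}\rho(\omega)^{-2}(\chi\circ\omega)\,\omega^*g_N' = \Big(\sum_{\omega\in\Omega}\omega^*\chi\Big)\,g_N' = g_N'
\end{equation*}
by the invariance \eqref{equivariance} of $g_N'$. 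The main technical point, which I view as the only genuine obstacle, is verifying that $\chi\,g_N'$ fits within the admissible class of tensors described just before \eqref{defgN}; this reduces to the remarks already made on the support, positive semi-definiteness, and block form of $\chi\,g_N'$.
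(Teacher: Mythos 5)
Your proof is correct and follows essentially the same route as the paper's: local finiteness of the sum via properness of the $\Omega$-action on $C$ and compactness of $\bar U$, positive definiteness from $C = \Omega \cdot K$, equivariance by reindexing the sum using that $\rho$ is a homomorphism, and the converse via the equivariant partition of unity $\omega^*\chi / \sum_{\omega'} (\omega')^*\chi$. Your converse, which keeps the factors $\rho(\omega)^{-2}$ explicit and checks $\sum_\omega \rho(\omega)^{-2}\,\omega^*(\chi\, g_N') = g_N'$ directly, is in fact slightly more careful than the paper's, whose displayed identity $g = \sum_\omega \omega^* g_{\mathrm{id}}$ drops these factors (a harmless typo, since $\omega^* g_{\mathrm{id}} = \rho(\omega)^2\, g_\omega$ there).
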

\begin{proof}
First, we prove that $g_N$ is well-defined. It is sufficient to prove that the sum has only a finite number of non-zero terms on any sufficiently small neighborhood of a point in $C$.

Let $x \in C$. Since $C = \Omega \cdot \bar K \subset \Omega \cdot U$, there exists a small open subset $V$ of $U$ and $\omega_0 \in \Omega$ such that $x \in \omega_0 \cdot V$. For any $\omega \in \Omega$, the term $\omega^* g$ is not identically vanishing on $\omega_0 \cdot V$ only if $\omega \cdot \omega_0 \cdot V \cap \bar U \neq \emptyset$. The group $\Omega$ acts properly on $C$, so the set of elements $\omega' \in \Omega$ such that $\omega' \cdot \bar V \cap \bar U \neq \emptyset$ is finite because $\bar U$ is compact, thus the set of elements $\omega \in \Omega$ such that $\omega \cdot \omega_0 \cdot V \cap \bar U \neq \emptyset$ is finite. This implies that $g_N$ is a sum of a finite number of terms on $\omega_0\cdot V$, so it is well-defined and smooth on this neighborhood of $x$. This analysis holds for any point, hence $g_N$ is well-defined and smooth.

As a sum of positive-definite or null terms, $g_N$ is positive definite or null at any point of $C$. We need to prove that it is non-zero everywhere. But for any $x \in C$ there exists $\omega \in \Omega$ and $y \in U$ such that $\omega(y) = x$ and $\omega^* g (x)$ is non-zero. Thus $g_N$ is a Riemannian metric on $E^{p-q} \times C$.

We now check the equivariance property \eqref{equivariance}. Let $\omega_0 \in \Omega$. One has:
\[
\omega_0^* g_N = \omega_0^* \sum\limits_{\omega \in \Omega} \rho(\omega)^{-2} \omega^*  g = \rho(\omega_0)^2 \sum\limits_{\omega \in \Omega} \rho(\omega\cdot \omega_0)^{-2}\omega_0^* \omega^*  g = \rho(\omega_0)^2 \sum\limits_{\omega \in \Omega} \rho(\omega)^{-2} \omega^*  g = \rho(\omega_0)^2 g_N.
\]

To prove that any equivariant metric is obtained by this construction, we first consider a non-negative smooth function $\chi$ with support in $\bar U$ and such that $\chi > 0$ on $U$. Such a function exists due to the way we constructed $U$. For any $\omega \in \Omega$, let $\chi_\omega := \omega^* \chi$. With the same arguments we used for the metric $g_N$, one can prove that $\chi_T := \sum\limits_{\omega \in \Omega} \chi_\omega$ is well-defined, positive, smooth and $\Omega$-invariant. Now, let $g$ be any $\Omega$-equivariant metric, and we define
\begin{align*}
\forall \omega \in \Omega, && g_\omega := \frac{\chi_\omega}{\chi_T} g.
\end{align*}
Then for any $\omega \in \Omega$ one has $g_\omega := \omega^* g_\mathrm{id}$. This yields:
\begin{align*}
g = \sum\limits_{\omega \in \Omega} \frac{\chi_\omega}{\chi_T} g = \sum\limits_{\omega \in \Omega} \omega^* g_\mathrm{id}
\end{align*}
and $g$ is constructed in the same way as $g_N$.
\end{proof}

The previous discussion together  with Lemma~\ref{lemmegN} allows us to define an LCP structure on $M$ induced by the Riemannian structure $(\tilde M, g_{E^q} + g_N)$. The flat part of this LCP manifold contains $E^q$ and we can prove the following:

\begin{prop} \label{irreductibility}
If $E^q$ is exactly the flat part of the LCP structure, then the characteristic group of the LCP manifold is the smallest vector subspace $F$ of $\RR^p$ containing $E^q$ and generated by a subfamily of $\ZZ^p$. In particular, the projection of $E^q$ onto $\RR^p/\ZZ^p$ is dense in the projection of $F$.
\end{prop}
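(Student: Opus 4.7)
The plan is to identify the characteristic group $H$ with the subspace $F$ of $\RR^p$, realised as a group of translations along the first factor of $\tilde M = \RR^p \times C$. Let $\pi : \RR^p \to E^{p-q}$ denote the projection parallel to $E^q$, set $\Lambda := \pi(\ZZ^p)$, and let $V_0 \subset E^{p-q}$ be the identity component of the closure $\bar\Lambda$. First I would establish the purely lattice-theoretic identity $F = E^q \oplus V_0$: a subspace $L = E^q \oplus W$ is generated by integer vectors if and only if $L + \ZZ^p$ is closed in $\RR^p$, which, after passing to the quotient by $E^q$, is equivalent to the closedness of $W + \Lambda$ in $E^{p-q}$; this happens precisely when the image of $\Lambda$ in $E^{p-q}/W$ is discrete, i.e.\ when $W \supset V_0$, so the minimal choice is $W = V_0$.

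The heart of the proof is then to show that $\bar P^0$, viewed inside the isometries of $N = E^{p-q} \times C$, consists precisely of translations by $V_0 \subset E^{p-q}$ acting trivially on $C$. The inclusion $V_0 \subset \bar P^0$ is immediate: every $z \in \ZZ^p \subset G$ restricts on $N$ to translation by $\pi(z) \in \Lambda$, so $\bar\Lambda$, and in particular its connected component $V_0$, sits inside $\bar P^0$. For the reverse inclusion, let $(\gamma_n) \subset G$ be a sequence with $\gamma_{n,N} \to \mathrm{id}$ in the compact-open topology and let $\omega_n = \mathcal{P}(\gamma_n) \in \Omega$. Then $\bar\gamma_n \to \mathrm{id}$ on $C$, and the proper discontinuity of the $\Omega$-action (see Remark~\ref{remark1}) forces $\bar\gamma_n = \mathrm{id}$ for large $n$. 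Under the torsion-free reduction (Remark~\ref{considernotorsion}), this in turn forces $\omega_n = \mathrm{id}$, hence $\gamma_n \in \ZZ^p$, and $\gamma_{n,N}$ is translation by $\pi(\gamma_n) \in \Lambda$. Passing to the limit shows that every element of $\bar P$ in some neighborhood of the identity is translation by an element of $\bar\Lambda$ near $0$, so the connected component satisfies $\bar P^0 \subset V_0$.

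Combining these steps gives $H = E^q \times \bar P^0 = E^q \oplus V_0 = F$ inside $\RR^p$. The density statement follows from Proposition~\ref{density}: the image of $E^q = \RR^q$ in the torus $H/\Gamma_0$ is dense, and since the torsion-free reduction identifies $\Gamma_0$ with $F \cap \ZZ^p$, this torus embeds as the image of $F$ in $\RR^p/\ZZ^p$ via the natural injection $F/(F \cap \ZZ^p) \hookrightarrow \RR^p/\ZZ^p$, yielding the claim. The main obstacle is the reverse inclusion $\bar P^0 \subset V_0$: ruling out any ``extra'' directions in the identity component of $\bar P$ coming from elements of $G$ that are not pure lattice translations is exactly where the proper discontinuity of $\Omega$ on $C$ together with the torsion-free reduction are needed.
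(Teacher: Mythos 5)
Your proposal is correct in substance and arrives at the paper's conclusion by a route that is close to, but not identical with, the one in the text. The shared core is the key step you correctly single out: proper discontinuity of $\Omega$ on $C$ together with the torsion-free reduction forces the elements of $G$ contributing to ${\bar P}^0$ to be translations of the $\RR^p$-factor. The paper proves this with a path-connectedness argument (a continuous path in ${\bar P}^0$ from the identity to $\omega\vert_N$ projects into the discrete orbit $\Omega\vert_C(x)$, hence is constant on $C$), whereas you use a sequential compact-open argument; these are interchangeable. Where you genuinely diverge is in identifying the answer: you compute ${\bar P}^0$ explicitly as $V_0$, the identity component of the closure of the projection of $\ZZ^p$ onto $E^{p-q}$, and verify $F=E^q\oplus V_0$ by a self-contained lattice computation, which simultaneously gives the well-definedness of $F$ and the density statement. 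The paper instead shows that $E^q\times{\bar P}^0$ is rational because $\Gamma_0$ is a full lattice of it contained in $\frac{1}{m}\ZZ^p$, and then pins it down as minimal via Proposition~\ref{density}. Your version is more concrete and makes the role of $\ZZ^p$ transparent.

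Three points need repair, none fatal. First, ``$\bar\gamma_n=\mathrm{id}$ on $C$ forces $\omega_n=\mathrm{id}$'' is too strong: an element of $\Omega$ acting trivially on $C$ need only have finite order, hence (with torsion-free linear part) is a constant translation by a torsion element of $(S^1)^p$; so $\gamma_n$ lies a priori only in a lattice $\Lambda'$ with $\ZZ^p\subset\Lambda'\subset\frac{1}{m}\ZZ^p$ --- exactly the point where the paper writes $\Gamma_0\subset\frac{1}{m}\ZZ^p$. Since $\frac{1}{m}\ZZ^p$ spans the same rational subspaces and yields the same $V_0$, your conclusion survives. Second, ``$W+\Lambda$ closed iff $W\supset V_0$'' is only an implication (an irrational $W$ containing $V_0$ need not give a closed sum); what you actually need is that $V_0+\Lambda=\bar\Lambda$ is closed and that every rational subspace containing $E^q$ contains $E^q\oplus V_0$, both of which hold. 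Third, the sequential argument must be applied to all elements of $\bar P$ in a neighborhood of the identity, not only to sequences converging to the identity itself, before invoking connectedness of ${\bar P}^0$.
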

\begin{proof}
Since we are working on an LCP manifold, we will use the notations of Section~\ref{preliminaries} in this proof. A reasoning similar as the one of Section~\ref{finitecovering} allow us to consider, up to a finite covering, that the linear part of $\Omega$ has no torsion, without changing the characteristic group and the lattice $\Gamma_0$.

We first prove that the vector subspace $F$ introduced in the statement of this proposition is unique and well-defined. In order to do so, it is sufficient to prove that the intersection of two subspaces $F_1$, $F_2$ of $\RR^p$ containing $E^q$ and generated by subfamilies $\mathcal B_1$ and $\mathcal B_2$ of $\ZZ^p$ still has this property. The fact that $E^q \subset F_1 \cap F_2$ is obvious, so it remains to prove that $F_1 \cap F_2$ is generated by a subfamily of $\ZZ^p$. Denote by $F_1'$ and $F_2'$ the $\QQ$-subspaces of $\QQ^p$ generated by $\mathcal B_1$ and $\mathcal B_2$ respectively, so that $F_1 = F_1' \otimes_\ZZ \RR$ and $F_2 = F_2' \otimes_\ZZ \RR$. Then one has
\[
F_1 \cap F_2 = (F_1' \otimes_\ZZ \RR) \cap (F_2' \otimes_\ZZ \RR) = (F_1' \cap F_2') \otimes_\ZZ \RR,
\]
and any $\QQ$-basis of $F_1' \cap F_2'$ gives a basis with vectors in $\ZZ^p$ after multiplying each vector by a suitable integer. This induces a basis of $F_1 \cap F_2$ with elements in $\ZZ^p$.

The non-flat part of the LCP manifold is $N := E^{p-q}\times C$ since $E^q$ is exactly the flat part and $N$ is orthogonal to $E^q$. By definition, ${\bar P}^0$ is the connected component of the identity in $\bar P$, the closure of the restriction of $G$ to $N$.

Let $\omega \in \Omega$ such that $\omega \vert_N \in {\bar P}^0$. There exists a continuous path $\sigma : [0,1] \to {\bar P}^0$ such that $\sigma(0) = \mathrm{id}$ and $\sigma(1) = \omega \vert_N$. Let $x \in C$. The set $\sigma ([0,1])\vert_C (\{ x \})$ is a path-connected subset of $C$. Since the compact-open topology on metric spaces is characterized by the uniform convergence on compacts and the elements of ${\bar P}^0$ are in the closure of $G\vert_N$, the closure of $G\vert_N (\{ (0,x) \})$ (where $0 \in E^{p-q}$) must contain $\sigma ([0,1]) (\{ (0,x) \})$, so the closure of $G\vert_C (\{ x \})$ must contain $\sigma ([0,1])\vert_C (\{ x \})$. Yet, $G\vert_C (\{ x \}) = \Omega\vert_C (\{ x \})$ is a discrete subset of $C$ because $\Omega$ acts properly discontinuously on $C$. Thus, $\sigma ([0,1])\vert_C (\{ x \})$ is reduced to a single point, yielding $x = \sigma(0)\vert_C (x) \in \sigma ([0,1])\vert_C (\{ x \}) = \{ x \}$, and we deduce that $\sigma (1)\vert_C (x) = x$. It follows that $\sigma(1)\vert_C = \mathrm{id}$ and $\sigma(1) \in E^{p-q}$ because the linear part of $\Omega$ is has no torsion. Consequently, the elements of $G \cap (E^q \times {\bar P}^0) = \Gamma_0$ are translations of $\RR^p$ and there exists $m \in \NN$ such that $\Gamma_0 \subset \frac{1}{m} \ZZ^p$ because $G$ acts properly discontinuously on $\RR^p \times C$.

We can now prove that $F = E^q \times {\bar P}^0$. Indeed, the vector space $E^q \times {\bar P}^0$ admits $\Gamma_0 \subset \frac{1}{m} \ZZ^p$ as a full lattice, so it is generated by a subfamily of $\ZZ^p$, thus it contains $F$. On the other hand, $E^q / \Gamma_0$ has to be dense in $(E^q \times {\bar P}^0) / \Gamma_0$ by Property~\ref{density}, so it is dense in $(E^q \times {\bar P}^0) / (m\Gamma_0)$. In particular, if $F'$ is a subspace of $\RR^p$ generated by a subfamily of $\ZZ^p$ with $F \subset F'$, then $F / \ZZ^p$ and $F' / \ZZ^p$ are two sub-tori of $\RR^p / \ZZ^p$ with $F / \ZZ^p \subsetneq F' / \ZZ^p$, but the image of $E^q$ is contained in $F/\ZZ^p$, so it is not dense in $F' / \ZZ^p$. We conclude that $E^q \times {\bar P}^0 = F$ and the image of $E^q$ in $\RR^p/\ZZ^p$ is dense in the image of $F$ because it is dense in $(E^q \times {\bar P}^0)/\Gamma^0$.
\end{proof}

The whole discussion of this section is summarized in the following theorem:

\begin{theorem} \label{converse}
Let $C$ be simply connected manifold. Let $p \ge 2$ be an integer and $\Omega$ be a discrete subgroup of $\mathrm{Aut}_{E^q} ((S^1)^p \times C \to C)$ whose restriction to $C$ acts properly and co-compactly (i.e. $C/\Omega$ is a compact good orbifold). Assume that there exists a decomposition $\RR^p = E^q \oplus E^{p-q}$ with $\mathrm{dim} \ E^q = q$ preserved by the linear part of $\Omega$, and such that the restriction of this linear part to $E^q$ contains only similarities with respect to a given scalar product $g_{E^q}$, not all being isometries. We also assume that for any element $\omega \in \Omega \setminus \{ id \}$ with a fix point $x \in C$, $\omega\vert_{\RR^q \times \{x \}}$ has no fixed point. Then, considering the canonical projection (see Definition~\ref{linpart})
\[
\mathcal P : \mathrm{Aut}^\ZZ_{E^q} (\RR^p \times C \to C) \to \mathrm{Aut}_{E^q} ((S^1)^p \times C \to C),
\]
the group $G := \mathcal P^{-1}(\Omega)$ acts freely, properly and co-compactly on $\tilde M := E^q \times N$ where $N := E^{p-q} \times C$, and there exists a Riemannian metric $g_N$ on $N$ such that $G$ acts by similarities, not all being isometries on $(\tilde M, h := g_{E^q} + g_N)$. All such metrics $g_N$ are constructed as in Equation~\eqref{defgN}. This induces an LCP structure on $M := \tilde M/G$.
\end{theorem}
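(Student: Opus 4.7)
The proof will be a synthesis of the work done earlier in the subsection, so my plan is to assemble three pieces. First, the claim that $G := \mathcal P^{-1}(\Omega)$ acts freely, properly discontinuously, and co-compactly on $\tilde M$ is precisely Lemma~\ref{actionG}, whose hypotheses match those of the theorem verbatim: the fixed-point assumption on $\Omega$ yields the free action on $(S^1)^p \times C$, the properness of $\Omega$ on $C$ gives the proper action, and the co-compact action on $C$ gives co-compactness via a suitable product fundamental domain. Hence $M := \tilde M/G$ is a compact smooth manifold with universal cover $\tilde M$ and fundamental group $G$.

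Second, I would construct the Riemannian metric $h := g_{E^q} + g_N$ on $\tilde M$ by taking $g_N$ to be any metric produced by Equation~\eqref{defgN}. Lemma~\ref{lemmegN} guarantees that such a $g_N$ is a well-defined smooth Riemannian metric on $N = E^{p-q} \times C$ satisfying $\omega^* g_N = \rho(\omega)^2 g_N$ for every $\omega \in \Omega$, and moreover that every $\Omega$-equivariant Riemannian metric on $N$ of the form \eqref{form2} arises this way. Since by hypothesis $\Omega$ acts on $(E^q, g_{E^q})$ by similarities with the same ratio $\rho$, the group $G$ then acts on $(\tilde M, h)$ by similarities with ratios prescribed by $\tilde\rho$: the elements of $\ZZ^p \subset G$ are isometries since they project to $\mathrm{id} \in \Omega$, while the existence of $\omega \in \Omega$ whose linear part is not an isometry of $g_{E^q}$ produces strict similarities.

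Third, to promote this similarity structure to an LCP structure, I would define $D$ as the quotient on $M$ of the $G$-invariant Levi-Civita connection $\nabla^h$ (similarities preserve the Levi-Civita connection) and let $c$ be the induced conformal class. By construction $D$ is torsion-free and locally coincides with the Levi-Civita connection of a representative of $c$, so it is a closed Weyl connection, and by the correspondence between closed Weyl structures and similarity structures recalled in the preliminaries, $D$ is non-exact precisely because $G$ contains strict similarities. The product decomposition $(\tilde M, h) = (E^q, g_{E^q}) \times (N, g_N)$ with $q \geq 1$ shows that $\tilde D = \nabla^h$ has reducible holonomy with a Euclidean factor, so by the LCP characterization recalled just after Theorem~\ref{LCPTh}, $(M, c, D)$ is an LCP manifold. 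Since the genuinely technical work---constructing and classifying the invariant metric---has already been carried out in Lemma~\ref{lemmegN}, I do not expect any further obstacle in the proof beyond this bookkeeping.
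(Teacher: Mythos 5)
Your proposal is correct and follows essentially the same route as the paper: the theorem is stated there as a summary of the preceding subsection, with the group action handled by Lemma~\ref{actionG}, the metric constructed and classified by Lemma~\ref{lemmegN}, and the LCP property deduced from the product decomposition $(\tilde M, h) = (E^q, g_{E^q}) \times (N, g_N)$ via the characterization of LCP structures as reducible similarity structures with a Euclidean factor. Your added remarks on the closed non-exact Weyl connection merely make explicit the final step the paper leaves implicit.
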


\section{Discussion on the hypotheses and further examples}

Theorem~\ref{admissibledata} together with Theorem~\ref{converse} give a description of simple LCP manifolds. However, there are still open questions concerning the hypotheses and possible simplifications that one could make in these statements. Indeed, the existence of the group $\Omega$ in Theorem~\ref{converse} is the only obstruction for the construction of an LCP manifold starting from a simply connected manifold $C$. In this section we discuss cases where one can construct such a group and we provide examples proving that some hypotheses cannot be removed. We will use the notations of these two theorems in the following section.

\subsection{The orbifold hypothesis and the structure of the fundamental group}

We begin this section by providing an example satisfying the hypotheses of Theorem~\ref{converse} where $C / \Omega$ is a compact orbifold which is not a manifold, i.e. $\Omega$ acts effectively on $C$ with fixed points.

\begin{example} \label{withorbifold}
Consider $C := S^2 \times \RR \subset \RR^3 \times \RR$ and the group $\Omega \simeq \ZZ/2\ZZ \times \ZZ$ of automorphisms of $(S^1)^2 \times C \to C$ acting for any $(\bar a,(x,y,z),s) \in (S^1)^2 \times S^2 \times \RR$, and for all $m \in \ZZ$ as:
\begin{align*}
&(\bar 1, 0) \cdot (\bar a, (x,y,z), s) = (\bar a + (0,1/2)^T,(-y,-x,z), s), \\
&(\bar 0, m) \cdot (\bar a, (x,y,z), s) = \left(\left( \begin{matrix} 1 & 2 \\ 2 & 3 \end{matrix} \right)^m \bar a, (x,y,z), s + m\right),
\end{align*}
i.e. $(\bar 1, 0)$ is the rotation of axis $z$ and angle $\pi$ on $S^2$. One easily check that this satisfies all the necessary hypotheses, since the matrix is diagonalizable with real eigenvalues different from $\pm 1$, and the elements $(\bar 1, m)$ for $m \in \ZZ$ have non-zero translation part. The orbifold $C/\Omega$ is not a manifold since the point $((0,0,1), 0)$ has a non-trivial isotropy group.
\end{example}

Example~\ref{withorbifold} proves that we cannot drop the case where $C/\Omega$ is an orbifold. However, in this example there is a finite covering (or equivalently a subgroup $\Omega'$ of $\Omega$ with finite index) of the LCP manifold constructed via Theorem~\ref{converse} such that $C/\Omega'$ is a manifold, since $C/\Omega$ is actually {\em very good}, being finitely covered by $S^2 \times S^1$. One can then ask whether for any LCP manifold it is possible to find a finite covering such that $C/\Omega$ is always a manifold in Theorem~\ref{admissibledata}. The answer is positive when $C/\Omega$ is the product of a manifold with a $2$-dimensional orbifold for  example, because any good compact $2$-orbifold is very good \cite[Theorem 2.5]{Scott}. We do not know if there exists a counter-example to this statement, so this question is still open.

Another natural question is the following: is the group $G$ defined in Theorem~\ref{converse} always a semi-direct product $\ZZ^p \rtimes \Omega$, as it is in all the examples we gave so far? The answer to this question is no, as shown by Example~\ref{notsemidirect} below.

\begin{example} \label{notsemidirect}
Consider $C := \RR^2$ and $\Omega\simeq \ZZ^2$ the group of automorphisms of $(S^1)^2 \times C \to C$ given by:
\begin{align}
(m_1, m_2) \cdot (\bar a, (x,y)) := (A^{m_1} \bar a + m_2 \tau, (m_1 x, m_2 y)), && \forall (\bar a, (x, y)) \in (S^1)^2 \times \RR^2,
\end{align}
where 
\begin{align*}
A := \left( \begin{matrix} -1 & 1 \\ 1 & -2 \end{matrix} \right), && \tau := \frac{1}{5} \left( \begin{matrix} 3 \\ 1 \end{matrix} \right).
\end{align*}
This group is well-defined and satisfies the hypotheses of Theorem~\ref{converse} since the matrix $A$ is diagonalizable with real eigenvalues different from $\pm 1$. Now, for the group $G$ defined by \eqref{Gdefinition} to be a semi-direct product, there should exist a subgroup $H$ of $G$ such that $H \cap \ZZ^2 = \{ \mathrm{id} \}$ and $H$ is isomorphic to $\Omega \simeq \ZZ^2$. However, for such a subgroup to exist, one should find representatives of $(1,0)$ and $(0,1)$ whose commutator is zero since $\Omega$ is Abelian. This is equivalent to finding $\tau_1, \tau_2 \in \ZZ^p$ so that the affine transformations of $\RR^2$ given by $(A,\tau_1), (I_2, \tau + \tau_2) \in \mathrm{GL}_2(\RR) \ltimes \RR^2$ commute. We have
\begin{align*}
(A,\tau_1)^{-1} \cdot (I_2, \tau + \tau_2)^{-1} \cdot (A, \tau_1) \cdot (I_2, \tau + \tau_2) = (I_2, (A - I_2) (\tau + \tau_2)) = (1,0)^T + (A - I_2) \tau_2.
\end{align*}
For the commutator to be zero, we should thus have $(1,0)^T = (I_2 - A) \tau_2$, and writing $\tau_2 =: (a,b)^T$ one obtains the system
\begin{align*}
\left\lbrace \begin{matrix} -2a + b = 1 \\ a -3b = 0 \end{matrix}\right.,
\end{align*}
which has no solution for $a, b \in \ZZ$. Thus, the group $G$ is not a semi-direct product of the form $\ZZ^p \rtimes \Omega$.
\end{example}

\subsection{The linear part of $\Omega$}
In order to understand which manifold can lead to the construction of an LCP manifold, it is important to know what are the possible group $\Omega$, and in particular what are their linear part. We study here the subgroups of $\mathrm{GL}_p(\ZZ)$ appearing as the linear part of $\Omega$. These groups should be finitely generated because the LCP manifold is compact, they should preserve a decomposition $E^q \oplus E^{p-q}$ of $\RR^p$ with $\mathrm{dim}(E^q) = q$, their restriction to $E^q$ consisting only of similarities for a given scalar product on $E^q$, but not all being isometries. Moreover, in regard of Proposition~\ref{irreductibility} together with Theorem~\ref{admissibledata} we can assume that the image of $E^q$ is dense in $\RR^p/\ZZ^p$ since we want to describe exactly the characteristic groups of LCP manifolds. Let $U$ be such a group.

The elements of $U$ have the following property, which is a consequence of the Jordan-Chevalley decomposition:

\begin{prop} \label{Ussimple}
All the elements of $U$ are semi-simple.
\end{prop}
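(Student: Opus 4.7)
The plan is to apply the Jordan--Chevalley decomposition over $\QQ$: since $A\in \mathrm{GL}_p(\ZZ)\subset\mathrm{GL}_p(\QQ)$, one obtains a unique factorization $A=A_sA_u$ with $A_s$ semi-simple, $A_u$ unipotent, the two commuting, both lying in $\mathrm{GL}_p(\QQ)$, and both expressible as polynomials in $A$ with rational coefficients. Since $A$ is semi-simple if and only if $A_u=\mathrm{id}$, the task reduces to showing $A_u=\mathrm{id}$.

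Because $A_s$ and $A_u$ are polynomials in $A$, they preserve every $A$-invariant subspace, and in particular each factor of the decomposition $\RR^p=E^q\oplus E^{p-q}$. The first key observation is that $A|_{E^q}$ is a similarity of the form $\lambda O$ with $O\in \mathrm{O}(E^q,g_{E^q})$, which is diagonalizable over $\CC$, hence semi-simple. By uniqueness of the Jordan--Chevalley decomposition applied to $A|_{E^q}=(A_s|_{E^q})(A_u|_{E^q})$, the factor $A_u|_{E^q}$ must equal $\mathrm{id}_{E^q}$, so $E^q\subset K$ where $K:=\ker(A_u-\mathrm{id})$.

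The second step is to upgrade this inclusion to $K=\RR^p$ using the density assumption on $E^q$ (following Proposition~\ref{irreductibility} and the discussion preceding the statement). The crucial point is that $A_u-\mathrm{id}\in M_p(\QQ)$, so $K$ is a $\QQ$-rational subspace of $\RR^p$: it admits a basis in $\QQ^p$, and $K\cap \ZZ^p$ is a full lattice of $K$. Consequently, the image of $K$ under the canonical projection $\pi:\RR^p\to\RR^p/\ZZ^p$ is a closed subtorus. Since $\pi(E^q)\subset \pi(K)$ and $\pi(E^q)$ is dense by assumption, one must have $\pi(K)=\RR^p/\ZZ^p$, which forces $K=\RR^p$. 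Therefore $A_u=\mathrm{id}$ and $A=A_s$ is semi-simple.

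The main point to be careful about is the rationality of the Jordan--Chevalley factors: this is the classical fact that over a perfect field like $\QQ$ the decomposition is defined over the field, with both factors polynomial in $A$. Once that input is in place, the three hypotheses enter the argument at complementary points -- the similarity condition gives semi-simplicity on $E^q$, the integrality of $A$ gives rationality of $K$, and the density of $E^q$ in the torus forces $K=\RR^p$ -- and combine cleanly to give the conclusion.
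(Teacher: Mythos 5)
Your proposal is correct and follows essentially the same route as the paper: Jordan--Chevalley over the perfect field $\QQ$, semi-simplicity of $A\vert_{E^q}$ from the similarity condition, and the density of the image of $E^q$ in $\RR^p/\ZZ^p$ to propagate triviality of the unipotent (resp.\ nilpotent) part to all of $\RR^p$. The only cosmetic differences are that the paper uses the additive decomposition $A=D+N$ and concludes by continuity of the induced torus endomorphisms, while you use the multiplicative decomposition and the closedness of the rational subtorus $\pi(\ker(A_u-\mathrm{id}))$; both are valid implementations of the same density argument.
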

\begin{proof}
Let $A \in U$. Since $\QQ$ is a perfect field, $A$ admits a Jordan-Chevalley decomposition $A =: D + N$ where $D \in M_p(\QQ)$ is semi-simple, $N \in M_p(\QQ)$ is nilpotent and $[D,N] = 0$. There exists an integer $m \in \NN$ such that $m D \in M_p(\ZZ)$ and $m N \in M_p(\ZZ)$. The linear transformations $m A = m D + m N$ and $m D$ descend to two group endomorphisms of the $p$-torus $\RR^p / \ZZ^p$, because they are matrices of $M_p(\ZZ)$.

Since $A\vert_{E^q}$ is a similarity, there exists $(\lambda, O, P) \in \RR^*_+ \times O(q) \times \mathrm{GL}_q (\RR)$ such that
\begin{equation} \label{Arestricted}
A\vert_{E^q} = \lambda P^{-1} O P
\end{equation}
and in particular, $A\vert_{E^q}$ is diagonalizable in $\CC$, then semi-simple. The Jordan-Chevalley decomposition of $A \vert_{E^q}$ is given by $D \vert_{E^q} + N \vert_{E^q}$, so we have $N \vert_{E^q} = 0$ because $A \vert_{E^q}$ is semi-simple, implying $A \vert_{E^q} = D \vert_{E^q}$. Consequently, the endomorphisms of $\RR^p / \ZZ^p$ induced by $m A$ and $m D$ coincide on the image of $E^q$ in $\RR^p / \ZZ^p$, which is dense, so they are equal by continuity. We conclude that $m A = m D$ and $A = D$, so $A$ is semi-simple.
\end{proof}

\begin{cor} \label{JCdecomp}
There exists a basis of $\RR^p$ containing only vectors of $\ZZ^p$ such that in this basis the matrix $A$ is diagonal by blocks, where the diagonal blocks have irreducible characteristic polynomial in $\ZZ$.
\end{cor}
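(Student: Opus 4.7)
The plan is to combine Proposition~\ref{Ussimple} with the rational primary decomposition of $A$, followed by a choice of cyclic vectors with integer coordinates.

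First, I would factor the minimal polynomial $\mu_A$ of $A$ over $\QQ$. Since $A$ is semi-simple (Proposition~\ref{Ussimple}) and its characteristic polynomial has integer coefficients, $\mu_A$ is square-free and, by Gauss's lemma, factors in $\ZZ[X]$ as $\mu_A = p_1 \cdots p_s$ with pairwise distinct monic irreducible $p_i \in \ZZ[X]$. By the primary decomposition theorem applied to $A$ acting on $\QQ^p$, one obtains an $A$-stable decomposition
\[
\QQ^p = \bigoplus_{i=1}^s V_i, \qquad V_i := \ker p_i(A) \subset \QQ^p,
\]
such that the restriction $A|_{V_i}$ has minimal polynomial exactly $p_i$.

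Next, I would equip each $V_i$ with the structure of a $K_i$-vector space, where $K_i := \QQ[X]/(p_i(X))$ is a number field and $X$ acts on $V_i$ as $A$. The key remark is that $V_i$ is cut out in $\QQ^p$ by linear equations with integer coefficients, so $V_i \cap \ZZ^p$ is a full-rank lattice in $V_i$; in particular, it contains a $K_i$-basis $(v_1^{(i)}, \ldots, v_{n_i}^{(i)})$ of $V_i$ (any $\QQ$-basis can be rescaled by integers to lie in $\ZZ^p$, and then pruned to a $K_i$-basis).

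I would then assemble the family
\[
\mathcal B := \bigl( A^k v_j^{(i)} \bigr)_{1 \le i \le s,\ 1 \le j \le n_i,\ 0 \le k \le \deg p_i - 1}.
\]
Each vector lies in $\ZZ^p$ because $A \in \mathrm{GL}_p(\ZZ)$, and $\mathcal B$ is a $\QQ$-basis of $\QQ^p$ (hence an $\RR$-basis of $\RR^p$) because its restriction to each $V_i$ is a $\QQ$-basis of $V_i$, thanks to the $K_i$-basis property. In $\mathcal B$, the matrix of $A$ is block diagonal: each cyclic block $\bigl(v_j^{(i)}, A v_j^{(i)}, \ldots, A^{\deg p_i - 1} v_j^{(i)}\bigr)$ is stable under $A$, and the relation $p_i(A) v_j^{(i)} = 0$ forces the corresponding block to be the companion matrix of $p_i$, whose characteristic polynomial is the irreducible polynomial $p_i \in \ZZ[X]$.

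The only point that requires a little care is the existence of a $K_i$-basis of $V_i$ inside $\ZZ^p$, but this is immediate from the density of $V_i \cap \QQ^p$ in $V_i$ combined with denominator clearing. The remaining arguments are standard consequences of the rational canonical form and present no real obstacle.
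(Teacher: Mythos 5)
Your proof is correct and follows essentially the same route as the paper: Proposition~\ref{Ussimple} gives semi-simplicity, the primary decomposition over $\QQ$ splits $\QQ^p$ into the kernels of the irreducible factors, and a further cyclic decomposition (your $K_i$-module structure plays the role of the paper's Frobenius decomposition) produces the blocks, with integer vectors obtained by clearing denominators. The only cosmetic difference is that your cyclic bases $(v, Av, \ldots, A^{\deg p_i - 1}v)$ make the blocks literal companion matrices of the $p_i$, whereas the paper takes arbitrary bases of the lattices $\ZZ^p \cap F_k$ and only asserts integrality of the entries and irreducibility of the characteristic polynomial.
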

\begin{proof}
We will work in the $\QQ$-vector space $\QQ^p$, in which $A$ is a well-defined linear transformation, and we will extend the result to $\RR^p$.

The characteristic polynomial $\chi_A$ of $A$ can be decomposed into monic irreducible factors over $\ZZ$ as
\begin{equation}
\chi_A = \prod_{k=1}^m P_k^{\alpha_k},
\end{equation}
where the polynomials $P_k$ are pairwise prime. This gives a decomposition of $\QQ^p$ into invariant subspaces
\begin{equation}
\QQ^p = \bigoplus_{k = 1}^m \mathrm{ker} \ P_k^{\alpha_k} := \bigoplus_{k = 1}^m \mathrm{ker} E_k,
\end{equation}
and the projections onto the $P_k^{\alpha_k}$ are polynomials in $A$ with coefficients in $\QQ$, so they are matrices with coefficients in $\QQ$. Let $1 \le k \le m$. The vector space $E_k$ admits as a basis any basis $\mathcal B_k$ of the full-lattice given by $\ZZ^p \cap E_k$. For a vector $v \in \mathcal B_k$, $A v \in \ZZ^p \cap E_k$ and then $A v$ is a linear combination of elements of $\mathcal B_k$ with coefficients in $\ZZ$. This means that the matrix of $A\vert_{E_k}$ written in $\mathcal B_k$ has coefficients in $\ZZ$. If we define the basis $\mathcal B$ of $\QQ^p$ as the concatenation of the bases $\mathcal B_k$, then $A$ is diagonal by blocks with coefficients in $\ZZ$.

It remains to look at what happens for the restriction to each $E_k$, so we can assume that $\chi_A = P^\alpha$ where $P$ is an irreducible polynomial in $\ZZ$. Since $A$ is semi-simple by Proposition~\ref{Ussimple}, $P$ is the minimal polynomial of $A$. The Frobenius decomposition gives the existence of a decomposition of the ambient vecto-space $\bigoplus_{k = 1}^\ell F_k$ where the $F_k$ are cyclic, stable by $A$ and the characteristic polynomial of $A \vert_{F_k}$ is $P$. The same argument as in the first part of the proof then gives us a basis of vectors with coefficients in $\ZZ$ adapted to the decomposition, in which $A \vert_{F_k}$ has coefficients in $\ZZ$ for each $k$.
\end{proof}

Let $A \in U$, and assume that $A\vert_{E^q}$ is not an isometry for the scalar product given on $E^q$, i.e. $\lambda \neq 1$ in equation \eqref{Arestricted}. Using Corollary~\ref{JCdecomp}, we can write $A$ under the form $\mathrm{Diag}(A1, \ldots A_m)$, where the blocks $A_k$ have irreducible characteristic polynomials. The subspace $E^q$  is spanned by real and complex part of complex eigenvectors of $A$ (since $E^q$ and $E^{p-q}$ are stable by $A$), i.e. it is a subspace of the sum of the eigenspaces of $A_1, \ldots, A_m$ whose associated eigenvalues have absolute value $\lambda$, which is stable by $A$. Moreover, the elements of $U$ all commute with the projector $\mathbf P$ on $E^q$ parallel to $E^{p-q}$.

Conversely, starting from a matrix $A \in \mathrm{GL}_p(\ZZ)$ of the form of Corollary~\ref{JCdecomp}, we can give a theoretical way of constructing an admissible group $U$. First, there should exist $\lambda \neq 1$ such that each block of $A$ has at least one eigenvalue of absolute value $\lambda$. Choose a $q$-dimensional subspace $E^q$ of $\RR^p$ stable by $A$, such that $A\vert_{E^q}$ has only eigenvalues with absolute value $\lambda$ and the image of $E^q$ in $\RR^p / \ZZ^p$ is dense (it is always possible, since we can just take the space spanned by the real and complex parts of eigenvectors with eigenvalues of absolute value $\lambda$). Since $A$ is semi-simple, one can choose a stable space $E^{p-q}$ supplementary to $E^q$, inducing a projector $\mathbf P$ on $E^q$ parallel to $E^{p-q}$. Let $\mathrm{Com} (\mathbf P) := \{ M \in \mathrm{GL}_p(\ZZ) \ \vert \ M \mathbf P = \mathbf P M \}$. Let $\Theta : \mathrm{Com} (\mathbf P) \to \mathrm{GL}_q(\RR)$, $M \mapsto \vert \det M\vert_{E^q} \vert^{-1} M\vert_{E^q}$ and $S := \Theta(\mathrm{Com} (\mathbf P))$. Remark that $\vert \det A\vert_{E^q} \vert^{-1} A\vert_{E^q}$ has only eigenvalues with absolute value $1$, so it is contained into a maximal compact subgroup $H$ of $\mathrm{GL}_q (\RR)$. This subgroup $H$ is conjugated to $O(q)$ and then defines a scalar product on $E^q$. Taking $U := \Theta^{-1} (H)$ defines an admissible group, and any admissible group can be obtained by this construction, up to a choice of $\mathbf P$, $H$ and taking a subgroup of $U$.

\subsection{The non-constant translations}
A visible difference between the group $\Omega$ introduced in Theorem~\ref{converse} and all the examples we gave so far is the presence of a possibly non-constant translation part in an element of $\Omega$. Nevertheless, one can easily obtain non-constant translations from any example with constant translation part introduced before. For that, it is sufficient to consider the trivial bundle $\RR^p \times C$ in Theorem~\ref{converse} and the diffeomorphism
\begin{equation}  \label{transfo}
\varphi : \RR^p \times C \to \RR^p \times C, (a,x) \mapsto (a + s(x), x)
\end{equation}
where $s : C \to E^{p-q}$ is any smooth function. Now, if one takes an element $\omega \in \Omega$ which has $\tilde \omega = (\RR^p \times C \ni (a,x) \mapsto (A a + b, \omega\vert_C (x)))$ as a lift, it follows:
\begin{align*}
\varphi^{-1} \tilde \omega \varphi (a,x) = (A a + b + A s(x) - s(\omega\vert_C (x)), \omega\vert_C (x)), && \forall (a,x) \in \RR^p \times C,
\end{align*}
and taking a function $s : C \to E^{p-q}$ such that $s$ is not equivariant will lead to non-constant translations. The new LCP structure is then isomorphic to the modified one, and one can then ask wether all the examples with non-constant translations arise from this construction. In other words, is it always possible to find a function $s : C \to E^{p-q}$ such that for any $\omega \in \Omega$ with $\tilde \omega = (\RR^p \times C \ni (a,x) \mapsto (A a + f(x),\omega\vert_C (x))$ as a lift, there exists a constant $c \in \RR^p$ such that
\begin{align} \label{condition}
\tilde \omega (s(x), x) = (s(\omega\vert_C (x)) + c, \omega\vert_C (x)), && \forall x \in C.
\end{align}
Indeed, using the diffeomorphism $\varphi$ defined in \eqref{transfo} one has in this case:
\begin{align*}
\varphi^{-1} \tilde \omega \varphi (a,x) = (A a + c, \omega\vert_C (x)), && \forall (a,x) \in \RR^p \times C,
\end{align*}
and all the translations are constant. Conversely, if there exists a diffeomorphism $\varphi$ and a section $s$ as in \eqref{transfo} such that through this transformation the translation part of $\Omega$ only contains constants, then $s$ has to satisfy the condition given by Equation~\eqref{condition}. Such a function $s$ does not always exists, as shown by the following example:

\begin{example}
We define the matrix
\begin{equation} \label{A0}
A_0 := \left( \begin{matrix} 1 & 1 \\ 1 & 2 \end{matrix} \right).
\end{equation}
This matrix is diagonalizable with eigenvalues $\lambda := \frac{3 + \sqrt{5}}{2}$ and $\lambda^{-1}$. The matrix of eigenvectors is
\begin{align*}
P := \left( \begin{matrix} \frac{1+\sqrt{5}}{2} & \frac{1 - \sqrt{5}}{2} \\ 1 & 1 \end{matrix} \right).¨
\end{align*}
We consider the affine transformations of $\RR^4$ depending on a parameter $z \in \RR$ given by:
\begin{equation}
\begin{aligned}
\alpha &: X \mapsto \left( \begin{matrix} A_0 & 0 \\ 0 & A_0  \end{matrix} \right) X, &&
\beta_1(z) &: X \mapsto X + (0,0,z,0), &&
\beta_2(z) &: X \mapsto X + (0,0,0,z).
\end{aligned}
\end{equation}
These maps canonically descend to $(S^1)^4$. One easily checks that the commutators of these transformations are:
\begin{equation}
\begin{aligned} \label{commutators1}
[\alpha,\beta_1(z)] = \beta_1(z)^{-1} \beta_2(z), && [\alpha,\beta_2(z)] = \beta_1(z), && [\beta_1(z), \beta_2(z)] = \mathrm{id}.
\end{aligned}
\end{equation}

We consider the manifold $C_0 := \RR^2 \times \RR^*_+$ on which acts co-compactly, freely and properly the group
\[
H := \langle T : (y,t) \mapsto (A_0 y, \lambda t), T_1 : (y,t) \mapsto (y+(1,0)^T, t), T_2 : (y,t) \mapsto (y+(0,1)^T, t) \rangle,
\]
where $(y,t) \in \RR^2 \times \RR$. Note that $C_0/ H$ can actually be given the structure of an LCP manifold. The universal cover of the compact manifold $C_0/H \times S^1$ is $C := C_0 \times \RR$, and its fundamental group is $H \times \ZZ$. We have the relations
\begin{equation}
\begin{aligned} \label{commutators1}
[T,T_1] = T_1^{-1} T_2, && [T,T_2] = T_1, && [T_1, T_2] = \mathrm{id},
\end{aligned}
\end{equation}
so for any $z \in \RR$ the groups $H$ and $\langle \alpha, \beta_1(z), \beta_2 (z) \rangle$ are isomorphic via the isomorphism $\iota_z$ determined by $\iota_z (T) = \alpha$, $\iota_z (T_j) = \beta_j(z)$. We consider the subgroup $\Omega \simeq H \times \ZZ$ of $\mathrm{Aut} ((S^1)^4 \times C \to C)$ given by
\begin{align*}
(h, n) \cdot (\bar a,(y,t)) = (\iota_z(h) \bar a, h (y, t), z), && \forall (h, n) \in H \times \ZZ, \forall (\bar a, (y,t), z) \in (S^1)^4 \times C_0 \times \RR.
\end{align*}
In order to prove that this group is well-defined, it is sufficient to show that the commutators of the generators $(T, 0), (T_1, 0), (T_2, 0), (\mathrm{id}, 1)$ of $\Omega$ satisfy the suitable relations. It is easily seen that
\begin{align*}
[(T,0) \cdot , (T_1,0) \cdot ] = (T_1^{-1} T_2,0) \cdot, && [(T,0) \cdot, (T_2,0) \cdot] = (T_1,0) \cdot, && [(T_1,0) \cdot, (T_2,0) \cdot] = (\mathrm{id}, 0) \cdot
\end{align*}
because these commutators can be computed at a fixed $z \in \RR$ and then correspond to the relations \eqref{commutators1}. We compute the remaining commutators. For any $(\bar a, (y,t), z) \in (S^1)^4 \times C_0 \times \RR$ one has
\begin{align*}
&[(T,0) \cdot, (\mathrm{id},1) \cdot] (\bar a, (y,t), z)= (\bar a, (y,t), z) \\
&[(T_1,0) \cdot, (\mathrm{id},1) \cdot] (\bar a, (y,t), z)= (\bar a + (0,0,1,0)^T, (y,t), z) = (\bar a, (y,t), z)\\
&[(T_2,0) \cdot, (\mathrm{id},1) \cdot] (\bar a, (y,t), z)= (\bar a + (0,0,0,1)^T, (y,t), z) = (\bar a, (y,t), z),
\end{align*}
then all these commutators are equal to the identity, proving that $\Omega$ is well-defined.
Now, the linear part of $\alpha$ is a matrix $A$, which is diagonalizable under the form $\mathrm{Diag} (\lambda, \lambda^{-1},\lambda, \lambda^{-1})$. We choose a new coordinate system on $\RR^4 \times C = \RR^4 \times \RR^2 \times \RR^*_+ \times \RR$ in the following way: if $(x_1, x_2, y_1, y_2, r_1, r_2, t, z)$ is the canonical system of coordinates we set
\[
\left( \begin{matrix} u_1 \\ u_2 \\ v_1 \\ v_2 \\ w_1 \\ w_2 \\ t \\ z \end{matrix} \right) = \left( \begin{matrix} P^ {-1} &  & & & \\ & P^{-1} & & & \\ & & P^{-1} & & \\ & & & 1 & \\ & & & & 1 \end{matrix} \right) \left( \begin{matrix} x_1 \\ x_2 \\ y_1 \\ y_2 \\ r_1 \\ r_2 \\ t \\ z \end{matrix} \right)
\]
In these new coordinates, $A$ is diagonal and we define $\frac{\partial}{\partial u} := \pi \frac{\partial}{\partial u_1} + \frac{\partial}{\partial v_1}$, so that the image of $\Span(u)$ under the canonical projection is dense in the torus $\RR^4/\ZZ^4$ (written in the old coordinates). The matrix $A$ preserves the decomposition $\RR^4 = \Span(\frac{\partial}{\partial u}) \oplus \Span(\frac{\partial}{\partial u_2}, \frac{\partial}{\partial v_1},\frac{\partial}{\partial v_2})$. This construction satisfies all the hypotheses of Theorem~\ref{converse}, so we obtain a group $G$ such that $(\RR^4 \times C)/G$ can be given an LCP structure with flat part containing $\Span(\frac{\partial}{\partial u})$. In particular, $\RR^4$ is contained in the characteristic group by Proposition~\ref{irreductibility}.

We consider the lift of $(T_1, 0) \in \Omega$ and $(\mathrm{id},1) \in \Omega$ respectively given by
\[
\tilde \omega_1 = (\RR^4 \times C_0 \times \RR \ni (a,(y,t),z) \mapsto (\beta_1(z) (a),T_1 (y,t), z)) \in G
\]
and
\[
\tilde \omega = (\RR^4 \times C_0 \times \RR \ni (a,(y,t),z) \mapsto (a, (y,t), z + 1)) \in G.
\]
If there existed a section $s$ satisfying \eqref{condition}, then using the transformation $\varphi$ given by \eqref{transfo}, one would get that
\begin{align*}
\varphi \tilde\omega_1 \varphi^{-1} &= (\RR^4 \times C_0 \times \RR \ni (a,(y,t),z) \mapsto (a + c_1,T_1 (y,t), z)) \\
\varphi \tilde\omega \varphi^{-1} &= (\RR^4 \times C_0 \times \RR \ni (a,(y,t),z) \mapsto (a + c_2, (y,t), z + 1))
\end{align*}
for $c_1,c_2 \in \RR$, so these two elements would commute. But one has
\[
[\varphi  \tilde\omega_1 \varphi^{-1}, \varphi \tilde\omega \varphi^{-1}] = \varphi[  \tilde\omega_1, \tilde\omega] \varphi^{-1} = (\RR^4 \times C_0 \times \RR \ni (a,(y,t),z) \mapsto (a + (0,0,1,0)^T, (y,t), z)),
\]
which is a contradiction. We can give an explicit metric on $\RR^4 \times C$ which defines an LCP structure: this metric is written in the coordinate system $(u_1, u_2, v_1, v_2, w_1, w_2, t, z)$ as
\begin{equation}
\left( \begin{matrix}
\frac{2}{\pi^2} & 0 & -\frac{1}{\pi} & 0 & 0 & 0 & 0 & \frac{w_1}{\pi} \\
0 & t^4 & 0 & 0 & 0 & 0 & 0 & 0 \\
-\frac{1}{\pi} & 0 & 1 & 0 & 0 & 0 & 0 & - w_1 \\
0 & 0 & 0 & t^4 & 0 & 0 & 0 & -t^4 w_2 \\
0 & 0 & 0 & 0 & 1 & 0 & 0 & 0 \\
0 & 0 & 0 & 0 & 0 & t^4 & 0 & 0 \\
0 & 0 & 0 &0 & 0 &0 & 1 & 0 \\
\frac{w_1}{\pi} & 0 & -w_1 & - t^4 w_2 & 0 & 0 & 0 & t^2 + w_1^2 + t^4 w_2^2
\end{matrix} \right).
\end{equation}
An orthonormal basis for this metric is given by:
\[
\pi \frac{\partial}{\partial u_1} + \frac{\partial}{\partial v_1}, t^{-2} \frac{\partial}{\partial u_2}, \frac{\partial}{\partial v_1}, t^{-2} \frac{\partial}{\partial v_2}, \frac{\partial}{\partial w_1}, t^{-2} \frac{\partial}{\partial w_2}, \frac{\partial}{\partial t}, t^{-1} \left( w_1 \frac{\partial}{\partial v_1} + w_2 \frac{\partial}{\partial v_2} + \frac{\partial}{\partial z} \right)
\]
and the dual frame is
\[
\frac{1}{\pi} d u_1, t^2 d u_2, d v_1 - \frac{1}{\pi} d u_1, t^2 (d v_2 - w_2 d z), d w_1, t^2 d w_ 2, d t, t d z.
\]
Straightforward computations show that the flat part of this LCP structure is exactly the integral manifold of the distribution spanned by $\frac{\partial}{\partial u}$. In addition, one has
\[
\left[ \frac{\partial}{\partial w_1}, w_1 \frac{\partial}{\partial v_1} + w_2 \frac{\partial}{\partial v_2} + \frac{\partial}{\partial z} \right] = \frac{\partial}{\partial v_1},
\]
showing that the distribution orthogonal to the fibers of $\RR^4 \times C \to C$ is not integrable, and in particular it is not possible to find a metric of the form \eqref{form2} with $b_{FB} = 0$.
\end{example}

However, it is always possible to remove the non-constant translation part when $G$ is a semi-direct product:

\begin{prop} \label{semidirectcase}
Assume that in Theorem~\ref{converse} $G = \ZZ^p \rtimes \Omega$ and $\Omega$ acts freely on $C$. Then the translation part $\Omega$ can be assumed to contain only constants belonging to $E^q$ up to a diffeomorphism.
\end{prop}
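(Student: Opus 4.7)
\emph{Plan.} The semi-direct decomposition $G = \ZZ^p \rtimes \Omega$ yields a canonical homomorphic lift $\omega \mapsto \tilde\omega$ from $\Omega$ to $G$; write $\tilde\omega(a,x) = (A_\omega a + c_\omega + \tilde f_\omega(x), \bar\omega(x))$ with $c_\omega \in E^q$ constant and $\tilde f_\omega : C \to E^{p-q}$, and set $A'_\omega := A_\omega\vert_{E^{p-q}}$. Since the lift is a genuine homomorphism (not merely modulo $\ZZ^p$), the family $\tilde f$ satisfies the strict cocycle identity
\[
\tilde f_{\omega_1\omega_2}(x) = A'_{\omega_1} \tilde f_{\omega_2}(x) + \tilde f_{\omega_1}(\bar\omega_2(x)).
\]
The computation preceding \eqref{condition} shows that conjugation by $\varphi(a,x) := (a + s(x), x)$ with $s : C \to E^{p-q}$ replaces $\tilde f_\omega(x)$ by $\tilde f_\omega(x) + A'_\omega s(x) - s(\bar\omega(x))$ while leaving $c_\omega$ and the $\ZZ^p$-translations untouched. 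So the proposition reduces to finding a smooth $s$ trivializing $\tilde f$ as a coboundary.

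\emph{Averaging construction.} Since $\Omega$ acts freely, properly and co-compactly on $C$, pick a non-negative smooth $\chi_0$ with compact support, positive on a compact set meeting every $\Omega$-orbit; the proper action makes $\chi_0^\Sigma(x) := \sum_{\omega \in \Omega}\chi_0(\bar\omega(x))$ a smooth, positive, $\Omega$-invariant function, and $\chi := \chi_0/\chi_0^\Sigma$ then satisfies $\sum_{\omega \in \Omega}\chi(\bar\omega(x)) = 1$ for every $x \in C$. Define
\[
s(x) := \sum_{\omega \in \Omega} \chi(\bar\omega(x))\, \tilde f_{\omega^{-1}}(\bar\omega(x)).
\]
The sum is locally finite, as in the proof of Lemma~\ref{lemmegN}, so $s$ is smooth and takes values in $E^{p-q}$.

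\emph{Verification.} After re-indexing the sum defining $s(\bar\omega_0(x))$ by $\omega \mapsto \omega\omega_0^{-1}$ and applying the cocycle identity to $\tilde f_{\omega_0 \omega^{-1}}(\bar\omega(x))$, the result collapses to
\[
s(\bar\omega_0(x)) - A'_{\omega_0}\, s(x) = \tilde f_{\omega_0}(x)\sum_{\omega \in \Omega}\chi(\bar\omega(x)) = \tilde f_{\omega_0}(x),
\]
so that $\varphi^{-1}\tilde\omega\varphi(a,x) = (A_\omega a + c_\omega, \bar\omega(x))$ for every $\omega \in \Omega$, and all translation parts become constants in $E^q$; because $s$ is valued in $E^{p-q}$, $\varphi$ also commutes with $\ZZ^p$-translations, so the normal factor of $G$ is preserved. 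The main technical point is the construction of $s$: the cocycle identity on $\tilde f$ is immediate from the semi-direct hypothesis, and the averaging formula is crafted precisely so that the re-indexing combined with the cocycle relation yields the desired coboundary equation.
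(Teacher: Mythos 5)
Your proof is correct, but it takes a genuinely different route from the paper. The paper's argument is non-constructive: it forms the associated bundle $\mathcal B = C \times_\Omega E^{p-q}$ over the compact manifold $C/\Omega$, invokes the fact that a fibre bundle with contractible fibre admits a global section, and then uses the standard correspondence between sections of $\mathcal B$ and equivariant maps $C \to E^{p-q}$ to obtain $s$. You instead produce $s$ explicitly: the splitting $G = \ZZ^p \rtimes \Omega$ gives a homomorphic lift, hence a \emph{strict} (not merely mod-$\ZZ^p$) cocycle $\tilde f$, and a partition of unity subordinate to the proper cocompact action lets you average $\tilde f_{\omega^{-1}}\circ\bar\omega$ into a coboundary; I checked the re-indexing $\omega \mapsto \omega\omega_0^{-1}$ combined with the cocycle identity, and it does collapse to $s(\bar\omega_0(x)) - A'_{\omega_0}s(x) = \tilde f_{\omega_0}(x)$ as claimed, which is exactly the paper's condition \eqref{condition2}. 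Your correct observation that the semi-direct hypothesis is precisely what upgrades the cocycle identity from ``modulo $\ZZ^p$'' to an exact identity is the same point the paper makes implicitly (and is where Example~\ref{notsemidirect} fails). What your approach buys: it is constructive, it is the standard vanishing argument for first cohomology with coefficients in a soft module over a group acting properly and cocompactly, and it does not actually use freeness of the $\Omega$-action on $C$ (only properness and cocompactness), whereas the paper needs freeness to make $C/\Omega$ a manifold so that the associated-bundle argument applies. What the paper's approach buys is brevity and a clear conceptual reason for existence (contractibility of the fibre). One cosmetic remark: you should state explicitly, as the paper does, that since the linear part preserves $E^q \oplus E^{p-q}$ and $s$ is $E^{p-q}$-valued, only the $E^{p-q}$-component of the translation needs to be killed; you do use this, but it is worth a sentence.
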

\begin{proof}
The assumption $G = \ZZ^p \rtimes \Omega$ allows us to define a subgroup $\tilde \Omega$ of $G$ inducing a splitting of the short exact sequence $0 \to \ZZ^p \to G \to \Omega \to 0$. In particular, there is an isomorphism $\iota: \Omega \to \tilde \Omega$.

From the previous discussion, it is sufficient to find a function $s : C \to E^{p-q}$ which is $\Omega$-invariant, i.e. which satisfies Equation~\ref{condition} with $c \in E^q$ for any $\omega \in \Omega$. Since the linear part of $\Omega$ preserves the decomposition $E^q \oplus E^{p-q}$, it is sufficient to find $s$ such that 
\begin{align} \label{condition2}
\iota(\omega)\vert_{E^{p-q} \times C} (s(x), x) = (s(\omega\vert_C (x)), \omega\vert_C (x)), && \forall \omega \in \Omega.
\end{align}
Consider the associated bundle
\[
\mathcal B = C \times_\Omega E^{p-q},
\]
where $\Omega$ acts on $C \times E^{p-q}$ by its natural action. This is a bundle over $C/\Omega$, which is a compact manifold since $\Omega$ acts freely on $C$. Its typical fiber $E^{p-q}$ is contractible, so it has a global smooth section \cite[Corollary 29.3]{Stee}. By a standard result, the sections of $\mathcal B$ are in one-to-one correspondence with the equivariant maps satisfying \eqref{condition2}, which implies the existence of the map $s$ we were searching for.
\end{proof}

\subsection{Existence of the group $\Omega$.} One question remains: when can one construct a group $\Omega$ as in Theorem~\ref{converse}? We know that $C/\Omega$ is a good compact orbifold since $\Omega$ acts properly and co-compactly on $C$. Thus, a way to answer this question is, starting from a good compact orbifold $\bar C = C / \pi_1(\bar C)$ (where $C$ is the universal cover of $\bar C$), to check if we can lift $\pi_1(\bar C)$ to a subgroup $\Omega$ of the automorphisms of a trivial principal torus bundle over $C$. We give here a necessary condition for the existence of $\Omega$, which turns out to be sufficient when $\bar C$ is manifold.

\begin{prop} \label{existencemorph}
Let $C$ be a simply-connected manifold and $\Omega$  be a group as given in the statement of Theorem~\ref{converse}. then $\Omega$ is isomorphic to a semi-direct product $\Omega' \rtimes \ZZ$ where $\Omega'$ is a subgroup of $\Omega$. 

Conversely, if $\bar C$ is a compact manifold with universal cover $C$ such that $\pi_1(\bar C) \simeq H \rtimes \ZZ$ for a subgroup $H$ of $\pi_1(\bar C)$, then there exists an integer $p \ge 2$ and a group $\Omega \subset \mathrm{Aut} ((S^1)^p \times C \to C)$ as in Theorem~\ref{converse}.
\end{prop}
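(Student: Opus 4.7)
The plan is to handle the two implications separately. For the forward direction, the idea is to extract a $\ZZ$-quotient of $\Omega$ from the similarity ratios on $E^q$. The linear parts of elements of $\Omega$ restrict to $E^q$ as similarities for $g_{E^q}$, so I would define $\rho : \Omega \to \RR^*_+$ sending each $\omega$ to its similarity ratio on $E^q$. Since $C/\Omega$ is a compact good orbifold, $\Omega$ is finitely generated, and thus $R := \rho(\Omega)$ is a finitely generated torsion-free abelian group --- free abelian of rank $\ge 1$, the positivity of the rank coming from the hypothesis that not all similarities are isometries. Composing $\rho$ with any projection $R \twoheadrightarrow \ZZ$ yields a surjective homomorphism $\pi : \Omega \twoheadrightarrow \ZZ$. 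Setting $\Omega' := \ker \pi$ and choosing any $\omega_0 \in \pi^{-1}(1)$, the equalities $\Omega' \cap \langle \omega_0 \rangle = \{\mathrm{id}\}$ (since $\omega_0^n \in \Omega'$ forces $n = 0$) and $\Omega = \Omega' \cdot \langle \omega_0 \rangle$ (each $\omega$ decomposes as $(\omega \omega_0^{-\pi(\omega)}) \cdot \omega_0^{\pi(\omega)}$) give the splitting $\Omega \simeq \Omega' \rtimes \ZZ$, with $\Omega'$ normal as the kernel of a homomorphism.

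For the converse, I would construct $\Omega$ explicitly in the smallest non-trivial case $p = 2$, $q = 1$. Fix a hyperbolic matrix $A \in \mathrm{GL}_2(\ZZ)$ with real eigenvalues $\lambda > 1 > \lambda^{-1}$, for instance $A = \begin{pmatrix} 1 & 1 \\ 1 & 2 \end{pmatrix}$ with $\lambda = (3+\sqrt{5})/2$, and let $E^q$ and $E^{p-q}$ be its two eigenlines. Writing $\pi_1(\bar C) = H \rtimes \ZZ$ with $\sigma$ a generator of the $\ZZ$-factor, I would define
\[
\Phi : H \rtimes \ZZ \to \mathrm{Aut}((S^1)^2 \times C \to C), \qquad (h, \sigma^n) \mapsto \bigl((\bar a, x) \mapsto (A^n \bar a,\ (h\sigma^n) \cdot x)\bigr),
\]
where elements of $\pi_1(\bar C)$ act on $C$ by deck transformations. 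The key point is that assigning trivial linear part to every element of $H$ automatically makes $\Phi$ a well-defined homomorphism: the relation $\sigma h \sigma^{-1} \in H$ of the semi-direct product is mirrored by $A \, I \, A^{-1} = I$ on the linear-part side, and all the other relations reduce to relations of $\pi_1(\bar C)$ that are already satisfied by the deck action on $C$.

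Setting $\Omega := \Phi(\pi_1(\bar C))$, the verification of the hypotheses of Theorem~\ref{converse} is then routine: the restriction of $\Omega$ to $C$ reproduces the deck action of $\pi_1(\bar C)$ on its universal cover, hence is free, proper and cocompact, and $\Omega$ is in particular discrete; the linear part preserves the eigendecomposition $E^q \oplus E^{p-q}$ by construction, and on the one-dimensional $E^q$ it acts by the similarities $\lambda^n$, not all equal to $1$; finally, the fix-point condition of Theorem~\ref{converse} is vacuous since $\pi_1(\bar C)$ acts freely on $C$. The only delicate step in this part is the initial choice of $A$: one needs a hyperbolic element of $\mathrm{GL}_2(\ZZ)$ with two positive real eigenvalues, which is supplied by any element of $\mathrm{SL}_2(\ZZ)$ with trace strictly greater than $2$.
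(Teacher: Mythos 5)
Your proposal is correct and follows essentially the same route as the paper: the forward direction is the same splitting argument via the similarity-ratio homomorphism $\rho:\Omega\to\RR^*_+$ (projecting its free abelian image onto a $\ZZ$ factor and sectioning), and the converse is the same $p=2$ construction with the matrix $\bigl(\begin{smallmatrix}1&1\\1&2\end{smallmatrix}\bigr)$, trivial linear part on $H$ and $A^n$ on the $\ZZ$ factor. If anything, you spell out the well-definedness of the homomorphism $\Phi$ and the verification of the hypotheses of Theorem~\ref{converse} more explicitly than the paper does.
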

\begin{proof}
Let $\rho : \Omega \to \RR^*_+$ associating to $\omega \in \Omega$ the similarity ratio of its linear part restricted to $E^q$. The group $\rho (\Omega)$ is a subgroup of $\RR^*_+$ with finite rank, generated by a finite number of independent elements. Let $\lambda$ be one of these elements and let $\pi_1: \rho (\Omega) \to \langle \lambda \rangle$ be the canonical projection. One has a short exact sequence
\[
0 \to \mathrm{ker} (\pi_1 \circ \rho) \to \Omega \to \langle \lambda \rangle \simeq \ZZ \to 0,
\]
and this sequence splits because one can find a section consisting of the map sending $\lambda$ to an element $\omega \in \Omega$ such that $\rho (\omega) = \lambda$. Thus $\Omega \simeq \mathrm{ker} (\pi_1 \circ \rho) \rtimes \ZZ$.

We now prove the converse part. We can take $p = 2$ and define $\Omega \simeq H \rtimes \ZZ$ by $\omega = \mathrm{id}$ for $\omega \in H$ and $\omega = ((S^1)^p \times C \ni (\bar a, x) \mapsto (A \bar a, \omega(x)))$ for $\omega \in \ZZ$, where $A$ is any suitable matrix (take for example the matrix defined in \eqref{A0}).
\end{proof}

\begin{rem}
if $C/\Omega$ is a manifold in Theorem~\ref{admissibledata}, we can actually say more. Indeed, since the fibration $\RR^p \times C \to C$ is a Riemannian fibration, the metric $h$ on $\RR^p \times C$ induces a metric $g_B$ on $C$ (we already emphasize this in this text). The fundamental group of the compact manifold $C/\Omega$ acts by similarities, not all being isometries on $(C, g_B)$, so $g_B$ is a non-Riemannian similarity structure on $C/\Omega$. An application of Theorem~\ref{LCPTh} gives that $(C, g_B)$ is either flat (a case which is classified \cite{Frie}), irreducible, or an LCP manifold. The two main conjectures remaining are then the following: are all LCP manifolds simple? and given an LCP manifold, can we always say, up to a finite cover, that $C/\Omega$ is a manifold? This last problem was tackled in section~\ref{withorbifold}. If the answers to these two questions are positive, then one could again decompose $(C, g_B)$ using Theorem~\ref{admissibledata}, and continue this process. This would end in a finite number of iterations, because the flat part is of positive dimension, leaving us with a flat or an irreducible manifold at the end.
\end{rem}

\renewcommand{\refname}{\bf References}

\end{document}